\newcommand{\conv}{\operatorname{conv}}
\newcommand{\id}{\mathbb{I}}
\newcommand{\orderO}{\mathcal{O}}
\newcommand{\scalprod}[2]{{#1}^\intercal #2}
\newcommand{\tw}{\operatorname{tw}}
\newcommand{\R}{\mathbb{R}}
\newcommand{\Z}{\mathbb{Z}}
\newcommand{\Q}{\mathbb{Q}}
\newcommand{\norm}[1]{\left\lVert#1\right\rVert}
\newcommand{\gap}{gap}
\newcommand{\Gap}{Gap}
\newtheorem{thm}{Theorem}
\newtheorem{prop}[thm]{Proposition}
\newtheorem{lem}[thm]{Lemma}
\newtheorem{cor}[thm]{Corollary}
\newcommand{\pitch}{p}
\newcommand{\forbiddenvtxgraph}[1]{H[\bar{#1}]}
\title{Characterizing Polytopes in the $0/1$-Cube with Bounded Chv\'atal-Gomory Rank}
\author{Yohann Benchetrit \and Samuel Fiorini \and Tony Huynh \and Stefan Weltge}
\begin{document}

\begin{abstract}
Let $S \subseteq \{0,1\}^n$ and $R$ be any polytope contained in $[0,1]^n$ with $R \cap \{0,1\}^n = S$.  
We prove that $R$ has bounded Chv\'atal-Gomory rank (CG-rank) provided that $S$ has bounded \emph{notch} and bounded \emph{gap}, where the notch is the minimum integer $\pitch$ such that all $\pitch$-dimensional faces of the $0/1$-cube have a nonempty intersection with $S$, and the gap is a measure of the size of the facet coefficients of $\conv(S)$. 

Let $\forbiddenvtxgraph{S}$ denote the subgraph of the $n$-cube induced by the vertices not in $S$.  We prove that if $\forbiddenvtxgraph{S}$ does not contain a subdivision of a large complete graph, then both the notch and the gap are bounded. By our main result, this implies that the CG-rank of $R$ is bounded as a function of the treewidth of $\forbiddenvtxgraph{S}$. We also prove that if $S$ has notch $3$, then the CG-rank of $R$ is always bounded. Both results generalize a recent theorem of Cornu\'ejols and Lee \cite{CL2016}, who proved that the CG-rank is bounded by a constant if the treewidth of $\forbiddenvtxgraph{S}$ is at most $2$.    

\end{abstract}

\maketitle

\section{Introduction}

Given a polytope $ R \subseteq \R^n $, its first \emph{Chv\'atal-Gomory-closure} (\emph{CG-closure}) is defined as $ R' := \{ x \in \R^n : \scalprod{c}{x} \geqslant \lceil \min_{y \in R} \scalprod{c}{y} \rceil \ \forall \, c \in \Z^n \} $, which can be shown to be again a (rational) polytope  with $R' \cap \Z^n = R \cap \Z^n$ (see Dadush, Dey, and Vielma~\cite{DDV2014}).
By setting $ R^{(0)} := R $ and $ R^{(t)} := (R^{(t-1)})' $ for every $ t \in \Z_{\geqslant 1} $, one recursively defines the $ t $-th CG-closure $ R^{(t)} $ of $ R $.
Chv\'atal~\cite{Chvatal73} proved that there exists a number $ t \in \Z_{\geqslant 0} $ such that $ R^{(t)} = \conv(R \cap \Z^n) $, and the smallest such number is called the \emph{Chv\'atal-Gomory-rank} (\emph{CG-rank}) of $ R $.
In this paper, we give new bounds on the CG-rank of a polytope $ R $ contained in $ [0,1]^n $ that only depend on properties of $ S = R \cap \{0,1\}^n $ and not on $ R $ itself.
This is in the spirit of Conforti, Del Pia, Di Summa, Faenza, and Grappe~\cite{CdPdSFG2015} except that we only consider relaxations contained in $ [0,1]^n $.

One particular reason to study the CG-rank is to obtain bounds on lengths of \emph{cutting-plane proofs} as introduced by Chv{\'a}tal, Cook, and Hartmann~\cite[Sec.~6]{CCH1989}.
Letting $ k $ be the CG-rank of $ R \subseteq \R^n $, the length of a cutting-plane proof is at most $ (n^{k+1} - 1) / (n - 1) $.
In fact, if $k$ is a fixed constant and $R \subseteq \R^n$ has CG-rank $k$, then optimizing a linear function over $R \cap \Z^n$ is one of the few problems that is known to be in $\mathsf{coNP} \cap \mathsf{NP}$ but not known to be in $\mathsf{P}$, see \cite[Thm. 5.4]{BP2009}. 

While the CG-rank of general polytopes in $ \R^n $ can be arbitrarily large compared to $ n $ (even for $n=2$), Eisenbrand \& Schulz~\cite{ES2003} showed that the CG-rank of a polytope contained in $ [0,1]^n $ is always bounded by $ \orderO(n^2 \log n)$.
Unfortunately, there exist polytopes in $ [0,1]^n $ whose CG-rank grows quadratically in $ n $ (see Rothvo{\ss} \& Sanit{\`a}~\cite{RS2013}).

This motivates the study for situations in which the CG-rank is at most a constant independent of $ n $.  
This question has been recently addressed by Cornu\'ejols \& Lee~\cite{CL2016}.
In their work, given a set $ S \subseteq \{0,1\}^n $, they consider the graph $ \forbiddenvtxgraph{S} $ whose vertices are the points of $ \bar{S} := \{0,1\}^n \setminus S $ where two points are adjacent if they differ in exactly one coordinate. 
Their main result is that if the treewidth of $ \forbiddenvtxgraph{S} $ (denoted $\tw (\forbiddenvtxgraph{S})$) is at most $ 2 $, then the CG-rank of any polytope $ R \subseteq [0,1]^n $ with $ R \cap \{0,1\}^n = S $ is bounded (they prove a bound of $ 4 $, which is tight).
One corollary of our work is that this holds for \emph{all} values of treewidth: the CG-rank of every polytope $R \subseteq [0,1]^n$ with $R \cap \{0,1\}^n=S$ is bounded by a function that only depends on the treewidth of $ \forbiddenvtxgraph{S} $. 

In order to state our main result, we define the \emph{notch} of a subset $ S \subseteq \{0,1\}^n $ as the smallest $ \pitch \in \Z_{\geqslant 0} $ such that every $ p $-dimensional face of $ [0,1]^n $ has a nonempty intersection with $ S $.
If $ S $ is empty, we define $ \pitch := n + 1 $.  
We warn the reader that in a previous version of this paper, we used the term \emph{pitch} instead of notch. We now use the term notch to avoid confusion with 
the definition of pitch due to Bienstock \& Zuckerberg~\cite{BZ2004}.  The difference between pitch and notch is discussed in~\cite{FHW2017}.

We define the \emph{\gap} of $ S $ as the smallest $ \Delta \in \Z_{\geqslant 0} $ such that $ \conv(S) $ can be described as the set of solutions $ x \in \R^n $ satisfying inequalities of the form
\begin{equation} \label{eq:gap}
    \sum_{i \in I} c_i x_i + \sum_{j \in J} c_j (1 - x_j) \geqslant \delta
\end{equation}
where $ I, J $ are disjoint subsets of $ [n] $, $ \delta,c_1,\dotsc,c_n \in \Z_{\geqslant 0} $ with $ \delta \leqslant \Delta $. We require that for each inequality in the description, the corresponding equation (obtained from \eqref{eq:gap} by replacing the inequality sign by an equality sign) defines a hyperplane spanned by $0/1$-points. Notice that if $ S $ is empty, then $ \Delta = 1 $, and if $S=\{0,1\}^n$, then $\Delta=0$.

The gap is well-defined since for every $S\subseteq \{0,1\}^n$, $\conv(S)$ has a description by inequalities in which every corresponding hyperplane is generated by $0/1$-points. To see this, consider a full-dimensional $0/1$- polytope $\conv(T)$ where $S \subseteq T \subseteq \{0,1\}^n$ such that $\conv(S)$ is a face of $\conv(T)$ (this exists since the set $\{0,1\}^n$ is full-dimensional). Clearly, the bounding hyperplane of every facet of $\conv(T)$ is generated by $0/1$-points. Since $\conv(S)$ is the intersection of the facets of $\conv(T)$ which contain it, the claimed description directly follows.

%
%
%
Our main result is the following.
\begin{thm}
    \label{thmMain}
    Let $ S \subsetneq \{0,1\}^n $ be a set with notch $ \pitch $ and \gap{} $ \Delta $.
    Then the CG-rank of any polytope $ R \subseteq [0,1]^n $ with $ R \cap \{0,1\}^n = S $ is at most $ \pitch + \Delta - 1 $.
\end{thm}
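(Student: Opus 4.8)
The plan is to reduce Theorem~\ref{thmMain} to a statement about a single valid inequality of $\conv(S)$ and to prove that statement by induction on its right-hand side. Throughout I would use two standard facts: (a) for every face $F$ of the cube $[0,1]^n$ and every polytope $Q\subseteq\R^n$ one has $Q^{(t)}\cap F=(Q\cap F)^{(t)}$, where on the right $Q\cap F$ is regarded as a polytope in the lower-dimensional cube $F$; and (b) every polytope contained in $[0,1]^d$ with no integral point has CG-rank at most $d$ (an easy induction on $d$). Since a polytope in $[0,1]^n$ that is contained in $\conv(S)$ must equal $\conv(S)$, and since $R^{(t)}\supseteq\conv(S)$ for all $t$ (because $R^{(t)}$ is convex and contains $R\cap\{0,1\}^n=S$), it suffices to show that every inequality appearing in a description of $\conv(S)$ as in \eqref{eq:gap} with all right-hand sides at most $\Delta$ holds on $R^{(p+\Delta-1)}$; as the closures are nested and $\delta\leqslant\Delta$, this follows from the following claim, which I would prove by induction on $\delta$. \emph{Claim: if $S\subseteq\{0,1\}^n$ has notch $p$, $R\subseteq[0,1]^n$ with $R\cap\{0,1\}^n=S$, and $f(x)=\sum_{i\in I}c_ix_i+\sum_{j\in J}c_j(1-x_j)\geqslant\delta$ is valid for $\conv(S)$ with $I,J$ disjoint, $\delta\in\Z_{\geqslant 0}$ and $c_i\in\Z_{\geqslant 1}$, then $f\geqslant\delta$ holds on $R^{(p+\delta-1)}$.} The case $\delta=0$ is immediate since $f\geqslant 0$ on all of $[0,1]^n$; and whenever $\delta\geqslant 1$ we have $p\geqslant 1$, since then $\conv(S)\neq[0,1]^n$.

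The base case $\delta=1$ is where the notch does its work. By definition of the notch, every face of $[0,1]^n$ disjoint from $S$ has dimension at most $p-1$, so by facts (b) and (a), $R^{(p-1)}$ is disjoint from every such face. Because the $c_i$ are positive, $\{x\in[0,1]^n:f(x)=0\}$ equals the single face $G_0=\{x:x_i=0\ \forall i\in I,\ x_j=1\ \forall j\in J\}$, and $G_0$ is disjoint from $S$ since $f\geqslant 1$ is valid for $\conv(S)$. Hence $R^{(p-1)}\cap G_0=\emptyset$, so $f$ attains a strictly positive minimum over the compact polytope $R^{(p-1)}$; rounding up the Chv\'atal--Gomory cut associated with the (integral) coefficient vector of $f$ (or its primitive part) then yields $f\geqslant 1$ on $R^{(p)}$.

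For the inductive step ($\delta\geqslant 2$) I would restrict to the faces $F_i:=\{x_i=1\}$ for $i\in I$ and $F_j:=\{x_j=0\}$ for $j\in J$. On $F_i$ the inequality $f\geqslant\delta$ becomes $\sum_{i'\in I\setminus\{i\}}c_{i'}x_{i'}+\sum_{j\in J}c_j(1-x_j)\geqslant\delta-c_i$, which is void if $\delta-c_i\leqslant 0$ and otherwise is an inequality of the same shape with right-hand side $\delta-c_i<\delta$, valid for $\conv(S\cap F_i)$; moreover $S\cap F_i$ has notch at most $p$, since a face of $F_i$ disjoint from $S\cap F_i$ is a face of $[0,1]^n$ disjoint from $S$. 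By the inductive hypothesis and fact (a), $f\geqslant\delta$ then holds on $R^{(p+\delta-2)}\cap F_i$, and likewise on $R^{(p+\delta-2)}\cap F_j$ for every $j\in J$; and by the base case $f\geqslant 1$ holds on all of $R^{(p+\delta-2)}$, since $p+\delta-2\geqslant p$. A final Chv\'atal--Gomory cut for the coefficient vector of $f$ would now upgrade this to $f\geqslant\delta$ on $R^{(p+\delta-1)}$, \emph{provided} $f>\delta-1$ holds everywhere on $R^{(p+\delta-2)}$.

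The main obstacle is precisely this proviso: one must rule out points of $R^{(p+\delta-2)}$ at which $f$ equals some integer $k\in\{1,\dots,\delta-1\}$ and which lie on none of the faces $F_i$ $(i\in I)$, $F_j$ $(j\in J)$ --- in particular, points in the relative interior of the cube, where the restriction argument above says nothing. The difficulty is that the level set $\{x:f(x)=k\}$ is not a face of $[0,1]^n$, so fact (a) does not apply to it, and the naive recursion appears to lose one closure round. I expect that closing this gap requires a more global induction: either strengthening the inductive invariant so that it also controls the minimum of $f$ on each face of the current relaxation, or running the induction on the dimension $n$ and using in the final step that --- once the two opposite facets $\{x_n=0\}$ and $\{x_n=1\}$ of the cube have been handled --- whatever part of the relaxation still violates $f\geqslant\delta$ is trapped strictly between those facets and can therefore be separated by a single further closure. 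Making this accounting precise, so that the total number of rounds comes out to exactly $p+\Delta-1$, is where the real work lies.
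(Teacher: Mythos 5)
Your setup (reduction to a single inequality of the form \eqref{eq:gap}, induction on the right-hand side $\delta$, and the base case $\delta=1$ via the notch, using that CG-closures commute with cube faces and that an integer-free polytope in $[0,1]^d$ has CG-rank at most $d$) matches the paper's proof in spirit; the base case is essentially the paper's Lemma~\ref{lemCGRankCoverInequalities}. But the inductive step is not complete, and the gap you flag yourself is exactly the missing content of the theorem. Restricting to the facets $\{x_i=1\}$, $i\in I$, and $\{x_j=0\}$, $j\in J$, only controls $f$ on those faces, and the inductive hypothesis applied to $f\geqslant\delta-1$ gives merely $f\geqslant\delta-1$ on $R^{(\pitch+\delta-2)}$ — a non-strict bound, so the CG cut for the coefficient vector of $f$ returns $\lceil\delta-1\rceil=\delta-1$ and nothing is gained. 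No amount of information on the boundary faces forces the minimum of $f$ over $R^{(\pitch+\delta-2)}$ to exceed $\delta-1$, and your suggested fallbacks (strengthening the invariant face-by-face, or inducting on $n$ and ``trapping'' the violating region between two opposite facets) are not carried out; the last one in particular cannot work as stated, since a level set $\{f=\delta-1\}$ meeting the interior of the cube is not separated by any single additional closure round without a quantitative strict bound.

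The paper closes precisely this gap with a perturbation-and-averaging trick that keeps the induction global (no restriction to cube faces in the step). For each $i_0\in I$ one observes that
\[
\sum_{i \in I \setminus \{i_0\}} c_i x_i + (c_{i_0} - 1) x_{i_0} + \sum_{j \in J} c_j (1 - x_j) \;\geqslant\; \delta - 1
\]
is valid for $R\cap\{0,1\}^n$ (and similarly with $c_{j_0}-1$ for $j_0\in J$); these $k:=|I|+|J|$ inequalities all have right-hand side $\delta-1$, so by the induction hypothesis they hold on $R^{(\pitch+\delta-2)}$. Averaging them gives
\[
\sum_{i \in I} \Bigl( c_i - \tfrac{1}{k} \Bigr) x_i + \sum_{j \in J} \Bigl( c_j - \tfrac{1}{k} \Bigr) (1 - x_j) \;\geqslant\; \delta - 1
\]
on $R^{(\pitch+\delta-2)}$, i.e.\ an inequality with coefficients \emph{strictly below} the $c_i$'s but the same right-hand side. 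Scaling it by a suitable $1+\varepsilon$ (chosen so that $(1+\varepsilon)(c_i-\tfrac1k)\leqslant c_i$, which is possible because $c_i\geqslant 1$, after the harmless normalization $c_i\leqslant\delta$) shows that $f\geqslant(1+\varepsilon)(\delta-1)>\delta-1$ holds everywhere on $R^{(\pitch+\delta-2)}$, and a single CG cut then yields $f\geqslant\delta$ on $R^{(\pitch+\delta-1)}$. This is the idea your proposal is missing: the strictness is manufactured by proving slightly \emph{stronger} (coefficient-decremented) inequalities at level $\delta-1$, not by analyzing where the relaxation meets the boundary of the cube.
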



In order to generalize the result of Cornu\'ejols \& Lee, we will show that $ \pitch $ and $ \Delta $ are both bounded in terms of $\tw( \forbiddenvtxgraph{S})$.  In fact, we will not even need the definition of treewidth because we actually prove a stronger result. We let $K_t$ be a clique on $t$ vertices. A \emph{subdivision of $K_t$} is a graph obtained from $K_t$ by replacing each edge of $K_t$ by a path.  

\begin{cor} \label{strongergeneralization1}
    Let $ S \subseteq \{0,1\}^n $ and let $ t $ be the maximum integer such that $ \forbiddenvtxgraph{S} $ contains a subdivision of $K_{t+1}$.  
    Then the CG-rank of any polytope $ R \subseteq [0,1]^n $ with $ R \cap \{0,1\}^n = S $ is at most $t + 2t^{t/2}$.
\end{cor}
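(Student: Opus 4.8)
The plan is to derive the corollary from Theorem \ref{thmMain} by bounding both the notch $\pitch$ and the gap $\Delta$ of $S$ in terms of $t$, the largest integer for which $\forbiddenvtxgraph{S}$ contains a subdivision of $K_{t+1}$. Since Theorem \ref{thmMain} gives CG-rank at most $\pitch + \Delta - 1$, it suffices to show $\pitch \leqslant t + 1$ and $\Delta \leqslant 2t^{t/2}$ (handling the degenerate case $S = \{0,1\}^n$ separately, where the CG-rank is $0$).

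First I would bound the notch. Suppose $\pitch \geqslant t + 2$; then by definition there is a $(t+1)$-dimensional face $F$ of $[0,1]^n$ with $F \cap S = \varnothing$, i.e. all $2^{t+1}$ vertices of $F$ lie in $\bar S$, and moreover the graph induced on these vertices — a $(t+1)$-dimensional subcube — is an induced subgraph of $\forbiddenvtxgraph{S}$. The key combinatorial fact I would invoke is that the $(t+1)$-dimensional hypercube graph $Q_{t+1}$ contains a subdivision of $K_{t+1}$ (indeed one can realize $K_{m}$ as a topological minor of $Q_{m}$ for all $m$, e.g. by a standard routing argument embedding $K_m$ into $Q_m$); this contradicts the maximality of $t$. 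Hence $\pitch \leqslant t + 1$.

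Next I would bound the gap. Take any inequality $\sum_{i \in I} c_i x_i + \sum_{j \in J} c_j(1-x_j) \geqslant \delta$ from the $0/1$-spanned description of $\conv(S)$ guaranteed in the excerpt, normalized so that $\delta, c_i \in \Z_{\geqslant 0}$ and with $c_i \geqslant 1$ for $i \in I \cup J$ (coordinates with zero coefficient are irrelevant). After the standard coordinate flips $x_j \mapsto 1 - x_j$ for $j \in J$ we may assume $J = \varnothing$, so the face $\conv(S)$ lies on the hyperplane $\{\sum_{i \in I} c_i x_i = \delta\}$, and every vertex of $S$ satisfies $\sum_{i \in I} c_i x_i \geqslant \delta$ with equality on a spanning set. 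The crucial observation is that the set of forbidden vertices relevant to this facet — those $0/1$-points $v$ with $\sum_{i \in I} c_i v_i < \delta$, or more precisely the ones that get "cut off" — forms a richly connected subgraph of $\forbiddenvtxgraph{S}$: walking along cube edges in coordinates of $I$ changes the linear form by $\pm c_i$, and because there must be vertices attaining each threshold value below $\delta$ with the relevant adjacencies, one can build a subdivided clique whose order grows with $\delta$ and with the number of distinct coefficient values. A clean way to organize this: if $\delta$ is large, then either $|I|$ is large, or some coefficient $c_i$ is large, and in both cases the "staircase" of forbidden vertices below the hyperplane contains a subdivision of $K_{t+1}$ unless $\delta \leqslant 2t^{t/2}$. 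I would make this precise by a counting/pigeonhole argument producing $t+1$ forbidden vertices that are pairwise joined by internally disjoint monotone lattice paths inside $\bar S$.

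The main obstacle is exactly this last step: turning "the facet coefficient $\delta$ is large" into "there is a subdivision of a large clique among the forbidden vertices." The delicate point is that a large $\delta$ alone does not obviously force many forbidden vertices — one must exploit that the hyperplane is spanned by $0/1$-points of $S$ while nearby integer points violate the inequality, and that the $\gcd$ of the coefficients can be normalized to $1$, to guarantee that the region $\{0 \leqslant \sum_{i\in I} c_i x_i < \delta\}$ contains enough $0/1$-points and that these are connected via cube edges in a clique-subdivision pattern. Getting the quantitative bound $\Delta \leqslant 2t^{t/2}$ (rather than something worse) will require a careful choice of which forbidden vertices serve as branch vertices of the subdivision and a tight estimate on how long the connecting paths must be; I expect the bound to come from a term of the form (number of coefficient levels)$^{(\text{number of branch vertices})/2}$, matching the $t^{t/2}$ in the statement.
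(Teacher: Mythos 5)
Your reduction is the right one (bound the notch and the gap in terms of $t$, then invoke Theorem~\ref{thmMain}), and your notch argument is essentially the paper's, up to an off-by-one: exhibiting a subdivision of $K_{t+1}$ inside the $(t+1)$-dimensional subcube of $\forbiddenvtxgraph{S}$ does \emph{not} contradict the maximality of $t$, since $t$ is defined precisely by the existence of a $K_{t+1}$-subdivision. You need a subdivision of $K_{t+2}$ in that subcube; this is fine because the $d$-dimensional cube contains a subdivision of $K_{d+1}$ (branch vertices: the points of support at most $1$; subdivision vertices: the points of support $2$), which is exactly the fact the paper uses to get $\pitch \leqslant t+1$.

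The genuine gap is the bound $\Delta \leqslant 2t^{t/2}$, which you acknowledge you cannot yet prove, and the route you sketch (a ``staircase'' of forbidden vertices below the hyperplane forming a large clique subdivision when the integer right-hand side $\delta$ is large) is not the mechanism that actually produces this bound. The paper's argument works with the \emph{normalized} facet: scale and flip so the inequality reads $\sum_i c_i x_i \geqslant 1$ with $c_i \in \Q_{\geqslant 0}$ and the hyperplane spanned by $0/1$-points, and then bound the common denominator of the $c_i$ rather than the integer $\delta$ directly. The only place a clique subdivision is used is to show $|I_{<1/2}| \leqslant t$, where $I_{<1/2} = \{i : c_i < 1/2\}$: the characteristic vectors of $\emptyset$, the singletons $\{i\}$, and the pairs $\{i,j\}$ with $i,j \in I_{<1/2}$ all violate the inequality, hence lie in $\bar S$ and form a subdivision of $K_{|I_{<1/2}|+2}$ in $\forbiddenvtxgraph{S}$. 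After that, the bound is purely linear-algebraic: since the hyperplane is spanned by $0/1$-points and each such point can meet $I_{=1/2} \cup I_{>1/2}$ only in a very restricted way, $c$ is the unique solution of a linear system whose coefficient matrix reduces (after row operations) to one whose only ``nontrivial'' block $E$ is a $\{-1,0,1\}$-matrix of order at most $t$; Cramer's rule plus the Hadamard bound then give that every $c_i$ is a multiple of $1/K$ with $K = 2|\det E| \leqslant 2t^{t/2}$, which is exactly the gap bound. Your pigeonhole/staircase plan neither supplies the restricted-support structure of the spanning $0/1$-points nor any determinant-type estimate, so there is no clear way it yields the quantitative $2t^{t/2}$; as it stands, the second half of your proof is a conjecture rather than an argument.
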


To see that Corollary~\ref{strongergeneralization1} is a generalization of the result by Cornu\'ejols \& Lee~\cite{CL2016}, the only thing the reader needs to know is that if a graph $G$ has a subdivision of $K_{t+1}$, then $\tw(G) \geqslant t$.  This is an easy fact (see Diestel~\cite{Diestel2010} for a gentle introduction to treewidth). Note that plugging $t=2$ into Corollary~\ref{strongergeneralization1} gives a bound of $6$ instead of $4$ (as obtained in~\cite{CL2016}).  However, as an easy corollary of another of our results (Theorem~\ref{thm:PitchThree}), we also obtain a bound of $4$ when $t=2$ in Corollary~\ref{strongergeneralization1}.

%
%

\subsubsection*{Paper structure}

In Section~\ref{secDiscussionParameters} we discuss the meaning of the parameters $ \pitch $ and $ \Delta $, and in particular their relation to the CG-rank.
For instance, we give examples that show that the CG-rank of a polytope in $ [0,1]^n $ cannot be bounded in only one of the two parameters.
Furthermore, we observe that optimizing a linear function over $ S $ can be done with $ \orderO(n^\pitch) $ oracle calls using an oracle that decides if a point $ x \in \{0,1\}^n $ belongs to $ S $, see Proposition~\ref{propOptimizingSmallPitch}. This algorithm already appears in~\cite{CL2016}, but we show that it is valid under a weaker hypothesis.

Section~\ref{secProof} contains the proof of Theorem~\ref{thmMain}.
In Section~\ref{sec:apx} we complement our main theorem by a result quantifying how well the $t$-th CG-closure approximates $\conv(S)$ for constant $t$ and constant $\pitch$, this time without bounding $\Delta$.
In Section~\ref{secPitchThree} we investigate the convex hulls of sets with notch $\pitch = 3$. In this case, we show that $ \Delta $ is automatically bounded and give a complete linear description of $ \conv(S) $. We show that treewidth at most $2$ implies notch at most $3$, but not vice versa, hence this result also strictly generalizes the main result of Cornu\'ejols \& Lee~\cite{CL2016}.
%

\section{Discussion of the parameters}
\label{secDiscussionParameters}

In this section, we discuss how the parameters \emph{notch} and \emph{\gap} of a set $ S \subseteq \{0,1\}^n $ influence the CG-rank of polytopes $ R \subseteq [0,1]^n $ with $ R \cap \{0,1\}^n = S$.

\subsection{Small CG-rank implies small notch}
We first observe that, in order to get a constant bound on the CG-rank, one has to restrict to sets $ S $ with bounded notch. Although this follows directly from known results, we include a proof for completeness. We point out that a little more work gives a lower bound of $\pitch$, which is tight.
\begin{prop} \label{prop:small_rank_implies_small_pitch}
    Let $ S \subseteq \{0,1\}^n $ have notch $ \pitch $.
    Then there exists a polytope $ R \subseteq [0,1]^n $ with $ R \cap \{0,1\}^n = S $ whose CG-rank is at least $ p - 1$.
\end{prop}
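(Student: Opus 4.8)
The plan is to exhibit, for a set $S$ with notch $\pitch$, an explicit relaxation $R$ whose CG-rank is large by exploiting a single face of the cube that barely meets $S$. By definition of the notch, there is a $(\pitch-1)$-dimensional face $F$ of $[0,1]^n$ with $F \cap S = \emptyset$; up to permuting coordinates and flipping some of them (which are affine unimodular transformations of the cube preserving $0/1$-points, hence CG-rank), we may assume $F = \{x : x_i = 0 \text{ for } i \in [n]\setminus[\pitch-1]\}$, i.e. $F$ is a copy of the $(\pitch-1)$-cube sitting in the first $\pitch-1$ coordinates. So $S$ avoids all of $F$, meaning every point of $\{0,1\}^{\pitch-1}\times\{0\}^{n-\pitch+1}$ lies in $\bar S$.

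The idea is then to place inside $R$ a "fractional cutting-plane-hard" gadget supported on the face $F$ while keeping $R$ disjoint from $F$ in integer points. Concretely I would take $R$ to be $\conv(S) $ enlarged by throwing in a carefully chosen fractional point (or small polytope) near the barycenter of $F$ — for instance the point $(\tfrac12,\dots,\tfrac12,0,\dots,0)$ together with points forcing a slow CG-elimination — or, more cleanly, I would borrow a known polytope in the $(\pitch-1)$-cube with CG-rank $\pitch-1$ whose only integer points are among the vertices of that smaller cube, and embed it in $F$. The canonical choice is the polytope $\{x\in[0,1]^{d} : \sum_{i\in I} x_i + \sum_{i\notin I}(1-x_i) \ge \tfrac12 \ \forall I\subseteq[d]\}$ with $d=\pitch-1$, which has no $0/1$ points and CG-rank exactly $d$ (this is the standard Chvátal–Gomory "empty polytope" example, essentially the one used by Eisenbrand–Schulz / Pokutta–Stauffer type constructions). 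I would set $R := \conv\big(\,S \ \cup\ (Q \times \{0\}^{n-d})\,\big)$ where $Q\subseteq[0,1]^{d}$ is this empty-polytope gadget, pick $R$ as the convex hull, and verify $R\cap\{0,1\}^n = S$: the only new integer candidates would lie in $F$, but $Q$ has no integer points and $\conv(S)\cap F=\emptyset$ since $F\cap S=\emptyset$ and $F$ is a face of the cube.

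Then I would argue the CG-rank lower bound by the standard restriction/projection principle: the face $F$ is defined by $0/1$-valid inequalities of the cube, the CG-closure operator commutes with taking faces defined by such inequalities (restricting to $x_i=0$ for $i>d$), and $R|_F = Q$ (as a subset of the $d$-cube). Hence $R^{(t)}$ restricted to $F$ contains $Q^{(t)}$ for all $t$, so the CG-rank of $R$ is at least the CG-rank of $Q$, which is $d=\pitch-1$. The main obstacle — and the step needing the most care — is making the "embedding into a face" argument airtight: one must check (i) that $R$ is genuinely a polytope in $[0,1]^n$ with $R\cap\{0,1\}^n=S$ (so the gadget $Q$ must be chosen to have no $0/1$ points and to not accidentally create new integer points when convex-hulled with $S$), and (ii) that CG-closures behave monotonically under intersecting with the coordinate face $F$, i.e. $(R\cap F)' \supseteq R' \cap F$ is not enough — one needs the reverse-type containment $R^{(t)} \cap F \supseteq (R\cap F)^{(t)}$, which does hold because every integer vector $c$ valid for the face can be lifted to an integer vector on $\R^n$ giving at least as strong a cut on $F$. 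I would cite the relevant lemma from the literature (this appears in Eisenbrand–Schulz \cite{ES2003} and is used in \cite{CL2016}) rather than reprove it. Finally, since the statement only claims a bound of $\pitch-1$ (not the tight $\pitch$), I would not need the extra work alluded to in the remark, so the construction above suffices.
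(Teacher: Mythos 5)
Your proposal is correct and follows essentially the same route as the paper: reduce to a $(\pitch-1)$-dimensional cube face $F$ disjoint from $S$, arrange that the half-integral Chv\'atal--Cook--Hartmann gadget sits on that face, and combine the face lemma for CG closures with \cite[Lem.~7.2]{CCH1989}. The only difference is that you build $R$ as $\conv\bigl(S \cup (Q\times\{0\}^{n-\pitch+1})\bigr)$ rather than taking the paper's explicit worst relaxation \eqref{eq:worst_relax} (whose restriction to $F$ is already the gadget), which costs you the small extra checks that $R\cap\{0,1\}^n=S$ (every $0/1$-point of $R$ is a vertex of the cube, hence an extreme point of $R$, hence lies in $S\cup(Q\times\{0\}^{n-\pitch+1})$, and $Q$ has no integer points---your stated reason, that new integer points could only lie in $F$, is not quite the right justification) and that $R\cap F=Q\times\{0\}^{n-\pitch+1}$ (true because $F\cap R$ is a face of $R$ and no point of $S$ lies on $F$); both are routine.
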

\begin{proof}
Following~\cite{CL2016}, we let $R$ be the worst possible\footnote{In the sense that $R' \supseteq Q'$ for all polytopes $Q \subseteq [0,1]^n$ such that $Q \cap \{0,1\}^n = R \cap \{0,1\}^n = S$.  Note that this implies that $R$ has the highest CG-rank among all such polytopes $Q$.} relaxation of $\conv(S)$:
\begin{equation}
\label{eq:worst_relax}
    R := \left\{ x \in [0,1]^n \mid \forall a \in \bar{S} : \sum_{i : a_i = 0} x_i + \sum_{i : a_i = 1} (1-x_i) \geqslant \tfrac{1}{2} \right\}.
\end{equation}
By the definition of $ \pitch $, there exists a $ (\pitch - 1) $-dimensional face $ F $ of $ [0,1]^n $ such that $ F \cap S = \emptyset $. The CG-rank of $R$ is at least that of its face $F \cap R$ (see for instance~\cite[Lem.~5.17]{CCZ2014}), which can be shown to be at least $ \pitch-1 $ using \cite[Lem.~7.2]{CCH1989}.
\end{proof}

It turns out that the structure of sets $ S \subseteq \{0,1\}^n $ with small notch $ \pitch $ can be efficiently exploited with respect to certain optimization tasks.
For instance, the $ \pitch $-th level of the Bienstock-Zuckerberg hierarchy~\cite{BZ2004} gives a tight description of $ \conv(S) $, at least when applied to sets $ S $ of set-covering type.
A much simpler observation is that linear programming over $ S $ is easy if $ \pitch $ is constant.

\subsection{Optimization algorithm for small notch}
Let $S \subseteq \{0,1\}$ have notch $\pitch$.  Assume that we have an oracle for deciding if a given point $x \in \{0,1\}^n$ belongs to $ S $. Here, we prove that optimizing a linear function over $ S $ can be done after performing at most $\orderO(n^p)$ oracle calls, and spending an extra polynomial time to select an optimum solution.

The algorithm is as follows. Given a cost vector $c \in \R^{n}$, we let $x^* \in \{0,1\}^n$ be defined as $x^*_{i} := 0$ if $c_i \geqslant 0$ and $x^*_{i} := 1$ if $c_i < 0$. Note that this is an optimum solution of $\min \{c^\intercal x \mid x \in \{0,1\}^n\}$. Next, among all the vertices of the cube $x \in \{0,1\}^n$ that are at Hamming distance at most $ \pitch $ from $x^*$, output any vertex $x$ that belongs to $ S $ and has minimum cost. 

\begin{prop}
\label{propOptimizingSmallPitch} For every $S \subseteq \{0,1\}^n$ with notch $\pitch$ and every $c \in \R^n$, the algorithm described above solves $\min \{c^\intercal x  \mid x \in S\}$ in $\orderO(n^{\pitch})$ oracle calls.
\end{prop}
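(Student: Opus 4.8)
The plan is to show that the vertex $x$ output by the algorithm is both feasible and optimal, and that the running time is as claimed. Feasibility is immediate: the algorithm only outputs a vertex that the oracle certifies lies in $S$. So the crux is optimality, i.e.\ showing that some optimal solution of $\min\{c^\intercal x \mid x \in S\}$ lies within Hamming distance $\pitch$ of the greedy point $x^*$. The running time is then routine: the number of vertices at Hamming distance at most $\pitch$ from a fixed vertex is $\sum_{k=0}^{\pitch}\binom{n}{k} = \orderO(n^{\pitch})$, and we make one oracle call per such vertex, plus polynomial bookkeeping to track the best one seen.

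\textbf{Optimality.} Let $y \in S$ be an optimal solution, and suppose for contradiction that $y$ has Hamming distance more than $\pitch$ from $x^*$; equivalently, the set $D := \{i \in [n] : y_i \neq x^*_i\}$ has $|D| \geqslant \pitch + 1$. Consider the face $F$ of $[0,1]^n$ obtained by fixing, for each coordinate $i \notin D$, the variable $x_i$ to the value $y_i$ (equivalently $x^*_i$), and leaving the coordinates in $D$ free. Then $\dim F = |D| \geqslant \pitch + 1 > \pitch$, so $F$ contains a $\pitch$-dimensional subface $F'$, and by the definition of notch, $F'$ (hence $F$) contains a point $z \in S$. By construction $z$ agrees with $x^*$ on $[n]\setminus D$. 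Now I would compare the cost of $z$ with that of $y$. On the coordinates in $[n] \setminus D$, both $z$ and $y$ agree with $x^*$, so they contribute equally. On the coordinates in $D$, the point $x^*$ is by definition the pointwise minimizer of $c^\intercal x$ over $\{0,1\}^D$ (coordinate $i$ is set to $0$ when $c_i \geqslant 0$ and to $1$ when $c_i < 0$). Hence, restricting attention to coordinates in $D$, the vector $z$ — which need not agree with $x^*$ on all of $D$ — has cost at least that of the all-$x^*$ choice on $D$\ldots

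\textbf{Main obstacle.} The step above does not quite finish: $z$ may still be far from $x^*$, because although $z$ agrees with $x^*$ outside $D$, it need not agree with $x^*$ everywhere on $D$. To close the argument I would instead pick $z \in S$ inside the face $F'$ so as to \emph{minimize} $c^\intercal z$ among points of $S \cap F'$ — but that is circular. The clean fix is a descent/exchange argument: among all optimal solutions in $S$, choose $y$ to be one of minimum Hamming distance to $x^*$, and derive a contradiction if that distance exceeds $\pitch$. With $D$ as above and $|D| \geqslant \pitch+1$, the face $F$ has dimension $|D| - $ wait, we want a face of dimension \emph{exactly} $\pitch$ on which we can still move; so fix all but $\pitch$ of the coordinates of $D$ to their $y$-values, keeping $\pitch$ chosen coordinates $D' \subseteq D$ free, giving a $\pitch$-face $G$ with $y \in$ (the $0/1$-points of the affine hull of) $G$ on the boundary pattern\ldots and by the notch property $G$ meets $S$ in some point $z$. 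Since $z$ and $y$ agree outside $D'$, and on the $\pitch$ free coordinates $D' \subseteq D$ the point $x^*$ strictly decreases (or keeps equal) every coordinate's contribution relative to $y$, we get $c^\intercal z' \leqslant c^\intercal y$ where $z'$ is $y$ with coordinates $D'$ reset toward $x^*$; if $z' \in S$ this contradicts minimality of the Hamming distance, and the notch property is exactly what guarantees we can find such a feasible $z'$ on the relevant $\pitch$-face. Making this exchange argument precise — in particular correctly choosing which $\pitch$ coordinates of $D$ to free so that the resulting feasible point is both no worse in cost and strictly closer to $x^*$ — is the one place requiring care; everything else is counting and bookkeeping.
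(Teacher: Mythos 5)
Your overall strategy --- take an optimal $y \in S$ at minimum Hamming distance from $x^*$, use the fact that resetting coordinates of $D := \{i \in [n] : y_i \neq x^*_i\}$ towards $x^*$ never increases the cost, and invoke the notch on a suitable low-dimensional face --- is the same as the paper's, but the step where you invoke the notch fails as written, and you explicitly leave it open. The $\pitch$-face $G$ you construct (everything outside $D$ fixed to the common value of $y$ and $x^*$, all but $\pitch$ coordinates of $D$ fixed to their $y$-values, the remaining $D' \subseteq D$ free) contains $y$ itself. The notch hypothesis only asserts that $G$ meets $S$, and this is already witnessed by $y$, so it yields no new point; in particular it does not guarantee that your specific point $z'$ (namely $y$ with the coordinates in $D'$ reset to $x^*$) lies in $S$, contrary to your sentence ``the notch property is exactly what guarantees we can find such a feasible $z'$''. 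As stated, no contradiction is obtained, so the optimality claim is not proved.

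The missing idea is that the face handed to the notch property must \emph{exclude} $y$. Since $|D| \geqslant \pitch + 1$, pick $D' \subseteq D$ with $|D'| = \pitch$ and some $i_0 \in D \setminus D'$, and let $G$ be the face obtained by fixing $x_{i_0} := x^*_{i_0} \neq y_{i_0}$, fixing all coordinates outside $D' \cup \{i_0\}$ to the values of $y$, and leaving $D'$ free. Every vertex $z$ of $G$ agrees with $x^*$ outside $D$ and at $i_0$, so $c^\intercal z = c^\intercal x^* + \sum_{i \in D : z_i \neq x^*_i} |c_i| \leqslant c^\intercal x^* + \sum_{i \in D \setminus \{i_0\}} |c_i| \leqslant c^\intercal y$ and $d_H(z,x^*) \leqslant |D| - 1 < d_H(y,x^*)$; since $\dim G = \pitch$, the notch gives some $z \in G \cap S$, an optimal solution strictly closer to $x^*$ --- the desired contradiction. (The paper packages the same idea slightly differently: it shows that the entire $|D|$-dimensional face $F$ spanned by flipping subsets of $D$ from $x^*$ satisfies $F \cap S = \{y\}$, because any other point of $F \cap S$ would be optimal and strictly closer to $x^*$; then $|D| > \pitch$ would let $F$ contain a $\pitch$-dimensional subface avoiding $y$, hence disjoint from $S$, contradicting the notch.) Finally, your feasibility remark is incomplete on its own: one must also argue that the algorithm outputs anything at all, i.e.\ that $S$ meets the Hamming ball of radius $\pitch$ around $x^*$; this follows directly from the notch (or from the completed optimality argument), as the paper notes.
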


\begin{proof}
Clearly, the number of oracle calls performed by the algorithm is at most
$$
\sum_{k = 0}^{\pitch} \binom{n}{k} \leqslant (n + 1)^{\pitch} = \orderO(n^{\pitch})\,.
$$

There is always a feasible solution $x \in \{0,1\}^n$ at Hamming distance at most $\pitch$ from $x^*$, since otherwise there would exist a $\pitch$-dimensional face of the cube that is disjoint from $S$, which contradicts that the notch of $S$ is $p$. Therefore, the algorithm always outputs some feasible solution.

In order to finish proving the correctness of the algorithm, consider an optimum solution $x^{\mathrm{opt}}$ in $S$ whose Hamming distance $d_H(x^{\mathrm{opt}},x^*)$ to $x^*$ is minimum. Let $I := \{i \in [n] \mid x^{\mathrm{opt}}_i \neq x^*_i\}$ be the set of indices of bits of $x^*$ that are flipped in $x^{\mathrm{opt}}$, so that we can express the optimum value as
$$
\mathrm{OPT} = c^\intercal x^{\mathrm{opt}} = c^\intercal x^* + \sum_{i \in I} |c_i|\,.
$$
Now consider the set $F$ of vertices $x \in \{0,1\}^n$ that are obtained by flipping the bits of $x^*$ indexed by some set $J \subseteq I$. Thus, $F = \{x \in \{0,1\}^n \mid \forall i \in [n] \smallsetminus I : x_i = x^*_i = x^{\mathrm{opt}}_i\}$. Clearly, $F$ is the vertex set of some face of the cube. Every $x \in F$ has cost at most $\mathrm{OPT}$ since we have
$$
c^\intercal x = c^\intercal x^* + \sum_{i \in J} |c_i|
\leqslant c^\intercal x^* + \sum_{i \in I} |c_i| = c^\intercal x^{\mathrm{opt}} = \mathrm{OPT}\,.
$$ 
By minimality of $d_H(x^{\mathrm{opt}},x^*)$, no $x \in F \smallsetminus \{x^{\mathrm{opt}}\}$ belongs to $S$.
Thus, $d_H(x^{\mathrm{opt}},x^*) \leqslant \pitch$ (otherwise, $F$ would contain a $p$-dimensional face of $[0,1]^n$ disjoint from $S$), and $x^{\mathrm{opt}}$ is one of the feasible solutions considered by the algorithm. The result follows.
\end{proof}

\subsection{Small CG-rank implies small gap}

One might wonder whether sets $ S \subseteq \{0,1\}^n $ with small notch are already simple enough to ensure that every relaxation for $ S $ contained in $ [0,1]^n $ has small CG-rank.
However, it turns out that such sets $ S $ also need to have a description with bounded coefficients only, as illustrated by the next two results.
Here, we denote by $||A||_\infty$ the maximum absolute value of an entry of $A$.

\begin{lem} \label{lem:increase}
Let $P = \{x \in \R^n \mid Ax \geqslant b\}$, where $A \in \Z^{m \times n}$ and $b \in \Z^m$. Letting $P'$ denote the first CG-closure of $P$, there is a description $P' = \{x \in \R^n \mid Bx \geqslant c\}$ with $ B $ and $ c $ integer such that $||B||_\infty \leqslant n||A||_\infty$.
\end{lem}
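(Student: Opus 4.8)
The plan is to use the classical description of the CG-closure $P'$ in terms of Chvátal-Gomory cuts derived from nonnegative combinations of the original inequalities, and then argue that we may restrict attention to combinations with small multipliers. Concretely, the first CG-closure $P'$ is cut out by the inequalities $\lfloor \scalprod{\lambda}{A} \rfloor x \geqslant \lceil \scalprod{\lambda}{b}\rceil$ ranging over all $\lambda \in \R^m_{\geqslant 0}$ with $\scalprod{\lambda}{A} \in \Z^n$, together with the inequalities of $P$ itself. Since only the fractional parts of $\lambda$ matter modulo integer shifts — adding an integer vector to $\lambda$ just adds a valid (integer) inequality of $P$ to the cut — we may assume $\lambda \in [0,1)^m$, or more precisely that $\lambda$ lies in the set $\Lambda := \{\lambda \in [0,1)^m : \scalprod{\lambda}{A} \in \Z^n\}$.

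The key observation is that $\Lambda$ is a (bounded) subset of a lattice-like structure: the condition $\scalprod{\lambda}{A} \in \Z^n$ together with $\lambda \in [0,1)^m$ means $\lambda$ ranges over a finite set of rational vectors, and in fact $P'$ is determined already by the finitely many cuts coming from vertices of the polyhedron $\{\lambda \geqslant \zerovec : \scalprod{\lambda}{A} - \mu = \zerovec, \ \zerovec \leqslant \mu\}$ intersected appropriately — but the cleaner route is: the CG-closure of a rational polyhedron $P = \{x : Ax \geqslant b\}$ equals $\bigcap \{x : \lceil \scalprod{\lambda}{A} x \rceil \text{-rounded cut}\}$ where it suffices to take $\lambda$ among the finitely many vectors in $\Lambda$, and each such $\lambda$ can be taken to have the form $\lambda = \lambda_0$ where $\lambda_0$ is obtained from a vertex of a polyhedron defined by $Ax$-type constraints with $0/\pm1$ right-hand sides. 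I would then bound the resulting coefficient vector $\lfloor \scalprod{\lambda}{A}\rfloor$: writing $\scalprod{\lambda}{A} = \sum_{k} \lambda_k A_{k,\cdot}$ with each $\lambda_k \in [0,1)$ and at most $n$ of the $\lambda_k$ nonzero (by taking $\lambda$ to be a vertex of a face defined by $n$ tight constraints), each entry of $\scalprod{\lambda}{A}$ has absolute value at most $\sum_{k} \lambda_k |A_{k,i}| < n \|A\|_\infty$, and hence $\lfloor \scalprod{\lambda}{A}\rfloor$ has entries bounded by $n\|A\|_\infty$ in absolute value. The right-hand sides $\lceil \scalprod{\lambda}{b}\rceil$ are then bounded accordingly (or one can clip them, since this only weakens cuts that are already implied).

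The main obstacle I anticipate is making rigorous the reduction to multipliers $\lambda$ with at most $n$ nonzero coordinates, i.e. that $P'$ is generated by CG-cuts whose multiplier vectors are vertices of a suitable polyhedron. This is standard — it follows from the fact that $P' = \conv(P \cap \Z^n) $-type arguments are not needed; rather one uses that $P'$ is a polyhedron (by Dadush–Dey–Vielma, cited in the excerpt) together with the observation that any CG-cut $\lfloor \scalprod{\lambda}{A}\rfloor x \geqslant \lceil \scalprod{\lambda}{b}\rceil$ is implied by the cut obtained from a vertex $\lambda^*$ of the polyhedron $\{\lambda \in [0,1]^m : \scalprod{\lambda}{A} = \scalprod{\lambda}{A}^{(\text{fixed integer value})}\}$, since along a face where $\scalprod{\lambda}{A}$ is constant the left-hand side of the cut is fixed and we only need $\scalprod{\lambda}{b}$ as large as possible, which happens at a vertex. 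Once this is in place, a vertex of a polyhedron in $\R^m$ defined (after fixing the value of $\scalprod{\lambda}{A}$, which is $n$ equations) by box constraints has at most $n$ of its coordinates strictly between their bounds; pushing the remaining coordinates to $0$ or $1$ and absorbing the integer parts into $P$'s own inequalities, we reduce to $\lambda \in [0,1)^m$ with support of size at most $n$, completing the bound. I would close by noting that $P$'s own defining inequalities already satisfy $\|A\|_\infty \leqslant n\|A\|_\infty$, so they fit into the claimed description.
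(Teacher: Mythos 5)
Your proposal is correct and follows essentially the same route as the paper: both reduce to CG cuts whose multiplier vector lies in $[0,1)^m$ and has at most $n$ nonzero entries, and then conclude via the triangle inequality that the resulting integral left-hand side has $\infty$-norm at most $n\|A\|_\infty$. The only cosmetic difference is that you justify the support bound by a vertex (basic-solution) argument on $\{\lambda \in [0,1]^m : \scalprod{\lambda}{A} = c\}$, whereas the paper invokes Carath\'eodory's theorem together with the standard fractional-part reduction.
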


\begin{proof}
Every valid inequality for $P'$ can be written as $\lambda^\intercal A x \geqslant \lceil \lambda^\intercal b \rceil$ for some $\lambda \in \R^m_+$. By Carath\'eodory's theorem, we may assume that $\lambda$ has at most $n$ non-zero entries. Furthermore, it is well known that one can replace every entry of $\lambda$ by its non-integral part to obtain an inequality that is valid for $P'$ and at least as strong as the original one (see, e.g.,~\cite[Lem.~5.13]{CCZ2014}). In other words, we may assume that $\lambda \in [0, 1)^m$ and $\lambda$ has at most $n$ non-zero entries. By the triangle inequality, this implies
$$
||\lambda^\intercal A||_\infty =
\norm{\sum_{i : \lambda_i \neq 0} \lambda_i A_i}_\infty
\leqslant \sum_{i : \lambda_i \neq 0} \underbrace{\lambda_i ||A_i||_\infty}_{\leqslant ||A||_\infty}
\leqslant n ||A||_\infty\,,
$$
and the lemma follows.
\end{proof}
\begin{prop}
    \label{propCGrankLowerBoundedByGap}
    Let $ S \subsetneq \{0,1\}^n $ be nonempty with \gap{} $ \Delta $.
    Then there exists a polytope $ R \subseteq [0,1]^n $ with $ R \cap \{0,1\}^n = S $ such that the CG-rank of $ R $ is at least $ \frac{\log \Delta}{\log n} - 1 $.
\end{prop}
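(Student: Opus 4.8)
The plan is to again use the worst-case relaxation $R$ from \eqref{eq:worst_relax} (defined by the inequalities $\sum_{i:a_i=0}x_i+\sum_{i:a_i=1}(1-x_i)\geqslant\tfrac12$ for $a\in\bar S$, together with $0\leqslant x\leqslant 1$), and to track how fast the facet coefficients of $\conv(S)$ can be recovered by iterated CG-closures starting from $R$. First I would observe that $R$ is described by an integer system $Ax\geqslant b$ with $\|A\|_\infty=1$ (after clearing the $\tfrac12$, i.e. multiplying each such inequality by $2$, the coefficients are $0,\pm1$ and the right-hand sides are bounded constants; the box constraints have coefficients in $\{0,\pm1\}$ too). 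Then Lemma~\ref{lem:increase} applies to each closure: if $R^{(t)}$ has a description with integer constraint matrix of $\infty$-norm at most $M_t$, then $R^{(t+1)}$ has one with $\infty$-norm at most $nM_t$. Since $M_0=\orderO(1)$, by induction $M_t\leqslant \orderO(n^{t})$, i.e. $\|A^{(t)}\|_\infty = \orderO(n^t)$ for any integer description $A^{(t)}x\geqslant b^{(t)}$ of $R^{(t)}$.

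Next I would bring in the gap. Let $k$ be the CG-rank of $R$, so $R^{(k)}=\conv(S)$. Pick a facet-defining inequality of $\conv(S)$ of the form \eqref{eq:gap} that witnesses the gap, i.e. one with $\delta=\Delta$; write it as $\sum_{i\in I}c_ix_i+\sum_{j\in J}c_j(1-x_j)\geqslant\Delta$ with all $c_i,\delta\in\Z_{\geqslant0}$. The point is that this inequality, being facet-defining for $\conv(S)=R^{(k)}$, must appear (up to a positive multiple and up to adding nonnegative combinations of the other facet inequalities) in every description of $R^{(k)}$; in particular, since $R^{(k)}$ is full-dimensional on its affine hull and this is a facet, there is an integer description of $R^{(k)}$ in which this very inequality occurs with coefficient vector having $\infty$-norm at least $\max(\max_i c_i,\Delta)\geqslant \Delta$ — more carefully, one shows $\Delta$ cannot exceed the largest coefficient magnitude over \emph{any} integer description of $R^{(k)}$, because in the primitive (gcd-reduced) representation of a facet the constant term $\Delta$ is at most, say, $n$ times the largest coefficient. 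Combining with the bound from the previous paragraph, $\Delta \leqslant \orderO(n)\cdot\|A^{(k)}\|_\infty = \orderO(n^{k+1})$, whence $k \geqslant \frac{\log\Delta}{\log n}-\orderO(1)$, and after checking constants one gets the stated bound $k\geqslant \frac{\log\Delta}{\log n}-1$.

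The main obstacle, and the step deserving the most care, is the last one: relating $\Delta$ — which is defined via a \emph{particular normalized form} \eqref{eq:gap} of a linear description of $\conv(S)$, with hyperplanes spanned by $0/1$-points and nonnegative integer coefficients — to the $\infty$-norm of an \emph{arbitrary} integer description of $R^{(k)}$. One has to argue that the normalization underlying the definition of gap does not blow up coefficients relative to any other integer description: given any integer facet-defining inequality for $\conv(S)$, bringing it into the form \eqref{eq:gap} (complementing variables so all coefficients are nonnegative, making the right-hand side equal to the number of $0/1$-points needed, dividing out the gcd) changes $\|\cdot\|_\infty$ by at most a factor polynomial in $n$, and in fact the constant $\delta$ in the primitive form is bounded by roughly $n$ times the coefficient $\infty$-norm since the bounding hyperplane passes through $0/1$-points. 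I would spell this out by evaluating \eqref{eq:gap} at a suitable $0/1$-point on the hyperplane to express $\delta$ as a signed sum of at most $n$ of the $c_i$'s. Everything else — the base case $M_0=\orderO(1)$, the induction via Lemma~\ref{lem:increase}, and passing to a face being harmless — is routine, and I would only need to be slightly careful that the $\orderO(1)$ and $\orderO(n)$ factors are small enough that the final additive constant is exactly $1$ rather than something larger (if not, one states the cleaner bound $\frac{\log\Delta}{\log n}-\orderO(1)$, but the paper claims $-1$, so presumably the constants work out).
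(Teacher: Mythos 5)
Your overall skeleton---start from a worst-case relaxation of $S$ with an integer description of constant $\infty$-norm, iterate Lemma~\ref{lem:increase} to get $\|A^{(k)}\|_\infty \leqslant \orderO(n^k)$ where $k$ is the CG-rank, and then argue that any integer description of $\conv(S)$ must contain a coefficient of size roughly $\Delta/n$---is exactly the paper's. The genuine problem is how you establish the last step. You pick a facet-defining inequality ``witnessing'' the gap and claim it must occur, up to positive scaling (and adding combinations of other facet inequalities), in every description of $R^{(k)}=\conv(S)$. First, the intermediate assertion ``there is an integer description of $R^{(k)}$ in which this very inequality occurs with coefficient norm at least $\Delta$'' has the wrong quantifier (you need a bound for \emph{every} integer description, in particular the one produced by iterating Lemma~\ref{lem:increase}), and the bound $\geqslant\Delta$ is unjustified---the correct relation is $\Delta\leqslant n\|c\|_\infty$, as you later note. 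Second, and more seriously, the uniqueness-of-facet-representation argument only works when $\conv(S)$ is full-dimensional; the proposition allows any nonempty $S\subsetneq\{0,1\}^n$, and when $\conv(S)$ is lower-dimensional a facet-defining inequality is determined only modulo the affine hull, so an arbitrary description need not contain any multiple of your witness and its coefficients cannot be compared with those of the witness. You also assert, without proof, that a gap-witnessing inequality with $\delta=\Delta$ can be taken facet-defining, whereas the gap is defined as a minimum over whole descriptions. The paper sidesteps all of this: given \emph{any} integer description $Ax\geqslant b$ of $\conv(S)$, rewrite every row in the complemented form \eqref{eq:gap}; by minimality in the definition of the gap, some row has $\delta\geqslant\Delta$, and since $\conv(S)\subseteq[0,1]^n$ is nonempty, evaluating that row at a point of $\conv(S)$ gives $\delta\leqslant n\|c\|_\infty$, hence $\|A\|_\infty\geqslant\Delta/n$---no facet structure, gcd reductions, or dimension assumptions. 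Your closing remark about evaluating \eqref{eq:gap} at a $0/1$-point is essentially this computation, but it does not repair the quantifier and dimension issues above.

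A smaller slip concerns the constant. With the relaxation \eqref{eq:worst_relax} you do not get coefficients $0,\pm1$: clearing the $\tfrac12$ doubles them to $0,\pm2$, so $M_0=2$ and the bookkeeping only yields $k\geqslant\frac{\log\Delta-\log 2}{\log n}-1$, short of the stated bound. The paper instead uses the relaxation with right-hand side $1$, namely $\sum_{i:a_i=0}x_i+\sum_{j:a_j=1}(1-x_j)\geqslant 1$ for all $a\in\bar{S}$ together with the box constraints, which has an integer description with $\|C\|_\infty=1$ and gives $n^{k+1}\geqslant\Delta$, i.e.\ exactly $k\geqslant\frac{\log\Delta}{\log n}-1$. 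This is trivially fixable, but as written your constants do not deliver the claimed statement.
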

\begin{proof}
    Note that we forbid $S =\{0,1\}^n$, since otherwise $\Delta=0$ and $\log \Delta$ is undefined.  
    First, we claim that every integer matrix $ A $ for which there is an integer vector $ b $ with $ \conv(S) = \{ x \in \R^n \mid Ax \geqslant b \} $ satisfies $ \|A\|_\infty \geqslant \frac{\Delta}{n} $.
    Indeed, every inequality in such a description is of the form
    \[
        \sum_{i \in I} c_i x_i - \sum_{j \in [n] \setminus I} c_j x_j \geqslant \beta
    \]
    where $ I \subseteq [n] $, $ c \in \Z^n_{\geqslant 0} $, $ \beta \in \Z $. Letting $\delta := \beta + \sum_{j \in [n] \setminus I}c_j$ we can rewrite this inequality as
    \[
        \sum_{i \in I} c_i x_i + \sum_{j \in [n] \setminus I} c_j (1-x_j) \geqslant \delta\,.
    \]
    By the definition of $ \Delta $, for at least one of these inequalities we must have $ \delta \geqslant \Delta $.
    Since $ \conv(S) \subseteq [0,1]^n $ is nonempty, this implies $ \|c\|_\infty \geqslant \frac{\Delta}{n} $.
    
    Second, consider the polytope $ R := \{ x \in [0,1]^n \mid \forall a \in \bar{S} : \sum_{i : a_i = 0} x_i + \sum_{j : a_j = 1} (1-x_j) \geqslant 1\} $.
    Note that $ R \subseteq [0,1]^n $ and $ R \cap \{0,1\}^n = S $.
    Furthermore, $ R $ has a description of the form $ R = \{ x \in \R^n \mid Cx \geqslant d \} $ with $ C,d$ integer and $ \|C\|_\infty = 1 $.
    Thus, letting $ k $ be the CG-rank of $ R $ and in view of Lemma~\ref{lem:increase}, we obtain $ n^k \geqslant \frac{\Delta}{n} $, which yields the claim.
\end{proof}
Let $ S \subseteq \{0,1\}^n $ with notch $ \pitch $ and \gap{} $ \Delta $, and denote by $ k $ the largest CG-rank of a polytope $ R \subseteq [0,1]^n $ with $ R \cap S = \{0,1\}^n $.
Summarizing the previous observations, we have seen that $ k $ can be bounded from below in terms of $ \pitch $ (Proposition~\ref{prop:small_rank_implies_small_pitch}), and also in terms of $ \Delta $ and $n$ (Proposition~\ref{propCGrankLowerBoundedByGap}).
This explains the occurrence of both parameters in the statement of Theorem~\ref{thmMain}.

In what follows next, we would like to discuss that none of the two parameters $ \pitch $ and $ \Delta $ can be bounded by a function that only depends on the other.
To see that $ \pitch $ cannot be bounded by a function in $ \Delta $, observe that the set $ S = \{x \in \{0,1\}^n \mid x_{\pitch} + x_{\pitch + 1} + \cdots + x_n \geqslant 1 \}$ has notch $ \pitch $ and \gap{} $ 1 $.

\subsection{Bounded Notch Does Not Imply Bounded CG-rank} \label{sec:bd_pitch_bad}
Next, we show that neither the parameter $ \Delta $ nor the CG-rank can be bounded in terms of $ \pitch $ alone. 

\begin{prop} \label{prop:badfacet}
For each $n \in \mathbb{N}$, there exists $S_n \subseteq \{0,1\}^{2n+2}$ such that $S_n$ has notch at most $7$ but gap at least $2^{n+1}$.  
\end{prop}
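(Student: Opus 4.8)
The plan is to exhibit $S_n$ explicitly, bound its notch from above by a direct combinatorial argument, and bound its gap from below by producing a single facet of $\conv(S_n)$ whose right-hand side is forced to be exponentially large in every description of the form \eqref{eq:gap}.

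I would set up the gap lower bound first, since its mechanism is clean. Arrange that $\conv(S_n)$ is full-dimensional (for instance by putting $\mathbf 0$ and all unit vectors in $S_n$). Then every description of $\conv(S_n)$ must contain, for each facet, a positive scalar multiple of the (unique up to positive scaling) facet-defining inequality for that facet; and if the description has the form \eqref{eq:gap}---integer coefficients, a split into $I$ and $J$, and a bounding hyperplane spanned by $0/1$-points, the last condition being automatic for a facet since its bounding hyperplane already contains $2n+1$ affinely independent vertices---then that multiple is a positive integer, so the $\delta$ that appears is at least the right-hand side of the \emph{primitive} (gcd-reduced) form of the facet inequality. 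Hence it suffices to produce one facet of $\conv(S_n)$ whose primitive integer normal, written as in \eqref{eq:gap}, has $\delta \ge 2^{n+1}$; equivalently, after flipping the coordinates in $J$, a facet $\sum_j c_j w_j \ge \delta$ of $\conv(S_n)$ with $c \in \Z^{2n+2}_{\ge 0}$ primitive, $\delta \ge 2^{n+1}$, and the hyperplane $\{\sum_j c_j w_j = \delta\}$ containing $2n+1$ affinely independent $0/1$-points.

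The notch bound is equivalent to saying $\bar{S}_n$ contains no $7$-dimensional subcube, and the difficulty---which I expect to be the crux---is that it pulls against the gap bound. If $\sum_j c_j w_j \ge \delta$ were the only nontrivial inequality defining $S_n$, then $\bar{S}_n$ would contain every $0/1$-point with $\sum_j c_j w_j < \delta$, and since the $c_j$ are nonnegative this region contains a $7$-subcube (take the $7$ lightest coordinates free and fix the rest to $0$) unless $\delta$ is at most the sum of the $7$ smallest coefficients; combined with $\delta \ge 2^{n+1}$ this confines the coefficients to a narrow range of large values, which in turn makes the set of tight $0/1$-points too combinatorially rigid to affinely span a hyperplane. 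So $S_n$ cannot be a single knapsack body: the deep facet has to \emph{emerge} from the interaction of several ``thin'' constraints whose combined complement is still free of $7$-subcubes, in the spirit of the known constructions of $0/1$-polytopes with exponentially large facet coefficients, but tuned so that the notch stays at a constant. I would build $S_n$ by a recursive block construction: split the $2n+2$ coordinates into $n$ blocks of size two plus two ``terminal'' coordinates; take $S_1$ to be a fixed small gadget whose convex hull has a facet of primitive right-hand side $4$; and pass from $S_k$ to $S_{k+1}$ by adjoining a block together with a bounded-width gadget that forces the new block to mirror and double the deep facet of $S_k$ while remaining local enough that adding it creates no $7$-subcube in the complement.

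With such a construction the two remaining inductions are routine in outline. One shows by induction on $k$ that the deep facet of $\conv(S_k)$ has primitive right-hand side $2^{k+1}$ and is facet-defining, by assembling $2k+1$ affinely independent tight $0/1$-points from those at stage $k-1$ together with new points supplied by the gadget. And one shows by induction on $k$ that the notch of $S_k$ is at most $7$: given any $2k-5$ coordinates fixed to arbitrary $\{0,1\}$-values (the statement being vacuous for $k \le 2$), one repairs the bounded-width gadget inequalities one block at a time, each repair flipping only a bounded number of the $7$ free coordinates, so that $7$ free coordinates always suffice---the base case being a finite check. Designing the gadget so that both inductions go through simultaneously is the whole content of the argument.
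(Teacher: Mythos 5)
There is a genuine gap: your proposal never actually constructs $S_n$. Everything after the (correct) preliminary reductions is a wish list --- a ``bounded-width gadget'' that ``mirrors and doubles the deep facet'' while ``creating no $7$-subcube,'' with both inductions only outlined and the gadget's existence left as ``the whole content of the argument.'' As it stands, no set $S_n$, no facet, and no notch bound have been produced, so the statement is not proved.

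Moreover, the step where you rule out the simplest construction is wrong, and this is what sends you toward the unrealized gadget machinery. You argue that a single knapsack body cannot work because forcing the seven smallest coefficients to sum to at least $\delta\geqslant 2^{n+1}$ confines all coefficients to a narrow band of large values, which you claim makes the tight $0/1$-points ``too combinatorially rigid to affinely span a hyperplane.'' That heuristic fails: the paper's $S_n$ is exactly a single knapsack set $\{x\in\{0,1\}^{2n+2} : \sum_i c_i x_i \geqslant 2^{n+1}\}$ with $c_1=2^n$, $c_2=2^{n-1}$, $c_{2i}=c_{2i+1}=2^n\bigl(1-(-1/2)^i\bigr)/3$ and $c_{2n+2}=2^n-c_{2n+1}$. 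With coefficients clustered around $2^n/3$ (plus the large $c_1$), the tight points have supports of size $3$ and $4$, and explicit tight supports such as $\{1,2,3\}$, $\{1,2i-1,2i,2i+1\}$, $\{1,2i-2,2i,2i+1\}$, $\{1,2n,2n+2\}$, $\{1,2n+1,2n+2\}$, $\{2,3,2n+1,2n+2\}$ give $2n+2$ affinely independent points, so the inequality is facet-defining; the gcd of the $c_i$ is $1$ (as $c_1$ is a power of $2$ and $c_{2n}$ is odd), giving gap at least $2^{n+1}$, while the seven smallest coefficients sum to $2^n\cdot 541/256>2^{n+1}$, giving notch at most $7$. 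So the tension you identified is real but resolvable within one knapsack inequality; the missing content of your proof is precisely such an explicit choice of coefficients (or, failing that, a fully specified gadget construction with the two inductions carried out), and the impossibility claim that justified avoiding it does not hold.
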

\begin{proof}
Fix $n \in \mathbb{N}$.  We define a vector $c \in \R^{2n+2}$ by setting $c_1=2^n$, $c_2=2^{n-1}$, $c_i=c_{i-1}$ if $i \in [3,2n+1]$ is odd, $c_i=(2^n-c_{i-1})/2$ if $i \in [3,2n+1]$ is even, and $c_{2n+2}=2^n-c_{2n+1}$.  Now consider the inequality $\sum_{i=1}^{2n+2} c_i x_i \geqslant 2^{n+1}$, and let $S_n$ be the set of vectors in $\{0,1\}^{2n+2}$ for which this inequality is satisfied. 

By definition, $\sum_{i=1}^{2n+2} c_i x_i \geqslant 2^{n+1}$ is a valid inequality for $\conv(S_n)$.  We claim that it is actually a facet of $\conv(S_n)$. This follows by observing that $c_1+c_2+c_3=2^{n+1}$, $c_1+c_{2i-1}+c_{2i}+c_{2i+1}=c_1+c_{2i-2}+c_{2i}+c_{2i+1}=2^{n+1}$ for all $i \in [2,n]$, $c_1+c_{2n}+c_{2n+2}=c_1+c_{2n+1}+c_{2n+2} = 2^{n+1}$ and $c_2+c_3+c_{2n+1}+c_{2n+2} = 2^{n+1}$.

 By solving a linear recurrence of degree~$1$, we find that $c_{2i} = c_{2i+1} = 2^n \cdot (1 - (-1/2)^i)/3$ for $i \in [1,n]$. It follows that 
the greatest common divisor of the entries of $c$ is $1$ since $c_1$ is a power of $2$ and $c_{2n}$ is odd.  Since all $c_i$ are non-negative, this implies that the gap of $S_n$ is at least $2^{n+1}$.

Finally, we show that $S_n$ has notch at most $7$.  That is, we must show that the $7$ smallest entries of $c$ sum to at least $2^{n+1}$.  This is easily checked by hand if $n<8$, so we may assume $n \geqslant 8$. Since $c_{2i} = c_{2i+1} = 2^n \cdot (1 - (-1/2)^i)/3$ for $i \in [1,n]$, it follows that the $7$ smallest entries of $c$ are $c_4, c_5, c_8, c_9, c_{12}, c_{13}$, and $c_{16}$.   The sum of these entries is  
\[
2^n \cdot \left(\frac{1}{4}+\frac{1}{4}+\frac{5}{16}+\frac{5}{16}+\frac{21}{64}+\frac{21}{64}+\frac{85}{256}\right) = 2^n \cdot \frac{541}{256} > 2^n \cdot 2 = 2^{n+1},
\]
as required.
\end{proof}
By Proposition~\ref{propCGrankLowerBoundedByGap} and Proposition~\ref{prop:badfacet} we directly obtain.
\begin{cor} \label{cor:badfacet}
    For each $n$, there exists a polytope $ R \subseteq [0,1]^{2n+2} $ such that $ S = R \cap \{0,1\}^{2n+2} $ has notch at most $ 7 $, but the CG-rank of $ R $ is $ \Omega(\frac{n}{\log n}) $.
\end{cor}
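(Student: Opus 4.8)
The plan is to obtain Corollary~\ref{cor:badfacet} by feeding the sets $S_n$ produced in Proposition~\ref{prop:badfacet} directly into Proposition~\ref{propCGrankLowerBoundedByGap}. First I would fix $n \in \mathbb{N}$ and let $S_n \subseteq \{0,1\}^{2n+2}$ be the set constructed in Proposition~\ref{prop:badfacet}, so that $S_n$ has notch at most $7$ and gap $\Delta \geqslant 2^{n+1}$. Writing $m := 2n+2$ for the ambient dimension, the task reduces to checking that $S_n$ satisfies the hypotheses of Proposition~\ref{propCGrankLowerBoundedByGap} and then reading off the resulting lower bound on the CG-rank.

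Two boundary cases need to be checked before Proposition~\ref{propCGrankLowerBoundedByGap} applies, namely that $S_n$ is a \emph{nonempty proper} subset of $\{0,1\}^m$. Recall $S_n$ is the set of $0/1$-vectors satisfying $\sum_{i=1}^{m} c_i x_i \geqslant 2^{n+1}$, where all $c_i$ are nonnegative (this follows from the closed form $c_{2i}=c_{2i+1}=2^n(1-(-1/2)^i)/3$ for $i\in[1,n]$ together with $c_1=2^n$, $c_2=2^{n-1}$). The origin violates this inequality, so $S_n \neq \{0,1\}^m$; and the all-ones vector satisfies it, since the facet identity $c_1+c_2+c_3=2^{n+1}$ and $c_i\geqslant 0$ give $\sum_i c_i \geqslant 2^{n+1}$, so $S_n \neq \emptyset$. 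In particular $\log \Delta$ is well defined.

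With these checks in place, Proposition~\ref{propCGrankLowerBoundedByGap} produces a polytope $R \subseteq [0,1]^m$ with $R \cap \{0,1\}^m = S_n$ whose CG-rank is at least
\[
\frac{\log \Delta}{\log m} - 1 \;\geqslant\; \frac{(n+1)\log 2}{\log(2n+2)} - 1 \;=\; \Omega\!\left(\frac{n}{\log n}\right),
\]
which is precisely the claimed bound in ambient dimension $2n+2$. I do not expect any genuine obstacle: the entire construction and the only nontrivial estimates — the notch-$\leqslant 7$ bound and the exponential gap bound for $S_n$ — have already been carried out in Proposition~\ref{prop:badfacet}, and the lower bound on CG-rank in terms of the gap is exactly Proposition~\ref{propCGrankLowerBoundedByGap}. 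The proof is thus pure bookkeeping; the only point requiring mild care is the verification of the two boundary cases above so that the hypotheses of Proposition~\ref{propCGrankLowerBoundedByGap} are met.
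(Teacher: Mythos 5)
Your proposal is correct and follows exactly the paper's route: the corollary is obtained by feeding the sets $S_n$ of Proposition~\ref{prop:badfacet} into Proposition~\ref{propCGrankLowerBoundedByGap}, which is all the paper does. Your extra verification that $S_n$ is nonempty and proper is a harmless (and valid) bit of bookkeeping that the paper leaves implicit.
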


\subsection{Bounded Treewidth Implies Bounded Notch and \Gap}

Finally, we demonstrate that Theorem~\ref{thmMain} can indeed be seen as a generalization of the results of Cornu\'ejols \& Lee~\cite{CL2016}.
To this end, it suffices to show that $ \pitch $ and $ \Delta $ can be bounded in terms of the treewidth of $ \forbiddenvtxgraph{S} $.
Recall that the largest $ t $ such that $\forbiddenvtxgraph{S}$ contains a subdivision of $K_{t+1}$ is at most  $ \tw (\forbiddenvtxgraph{S})$. 
This observation, together with the following lemma, imply Corollary~\ref{strongergeneralization1}.
\begin{lem} \label{lem:treewidth}
Let $S \subseteq \{0,1\}^n$, and let $\pitch$ and $\Delta$ respectively denote the notch and the \gap{} of $S$. If $t$ is maximum such that $\forbiddenvtxgraph{S}$ contains a subdivision of $K_{t+1}$, then $\pitch \leqslant t+1$ and $\Delta \leqslant 2t^{t/2}$.
\end{lem}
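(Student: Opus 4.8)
The strategy is to bound the two parameters separately, using in both cases the combinatorial structure forced by the absence of a large $K_{t+1}$-subdivision in $\forbiddenvtxgraph{S}$.

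For the notch bound $\pitch \leqslant t+1$, I would argue by contrapositive: if the notch exceeds $t+1$, then there is a $(t+1)$-dimensional face $F$ of $[0,1]^n$ with $F \cap S = \emptyset$, i.e.\ all $2^{t+1}$ vertices of $F$ lie in $\bar S$. The subcube $F$, as an induced subgraph of the $n$-cube, is a copy of the $(t+1)$-dimensional hypercube $Q_{t+1}$, and this sits inside $\forbiddenvtxgraph{S}$. So it suffices to show that $Q_{t+1}$ contains a subdivision of $K_{t+2}$. This is a clean, self-contained claim about hypercubes: pick $t+2$ suitable ``branch'' vertices in $Q_{t+1}$ and connect them pairwise by internally disjoint paths. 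One natural choice is to take the all-zeros vertex together with the $t+1$ standard basis vectors $e_1, \dots, e_{t+1}$ as branch vertices; the edges among basis vectors and from $0$ to each $e_i$ are already present or can be routed through the ``upper'' part of the cube (vectors of weight $\geqslant 2$) using disjoint paths, since the cube has enough room. I would cite or prove the (known) fact that $Q_{d}$ contains a $K_{d+1}$-subdivision.

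For the gap bound $\Delta \leqslant 2t^{t/2}$, I would start from a facet-defining inequality of $\conv(S)$ in the normalized form \eqref{eq:gap}, say $\sum_{i\in I} c_i x_i + \sum_{j\in J} c_j(1-x_j) \geqslant \delta$ with the bounding hyperplane spanned by $0/1$-points, and I want to show $\delta \leqslant 2t^{t/2}$. The key observation is that the $0/1$-points \emph{not} satisfying this inequality (those with small weighted sum) all lie in $\bar S$, and the inequality being a facet means there are many $0/1$-points \emph{on} the hyperplane in $\conv(S)$. After the standard reduction (complementing coordinates in $J$, so WLOG $J = \emptyset$ and we look at $\sum_i c_i x_i \geqslant \delta$ with $c_i \geqslant 0$), the points of $\bar S$ relevant here are exactly the $0/1$-points $x$ with $\sum_i c_i x_i < \delta$. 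The plan is: (i) show that the support structure of $c$, combined with the $0/1$-points on the hyperplane, forces a ``rich'' subgraph inside $\forbiddenvtxgraph{S}$ — specifically I expect to extract a large clique minor or clique subdivision whose size grows with $\delta$ (more precisely, one wants $\log_2$ or a similar function of $\delta$ to be bounded by something like $t$); (ii) conclude that if $\delta$ were larger than $2t^{t/2}$, we would get a $K_{t+1}$-subdivision, a contradiction. Identifying which low-weight $0/1$-vectors to use as branch vertices and how to route disjoint paths among them through $\bar S$ is the crux: one uses that between any two $0/1$-points with weighted sum below $\delta$ one can often find a monotone lattice path staying below $\delta$, hence staying in $\bar S$.

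The main obstacle will be step~(i) of the gap argument: turning the arithmetic statement ``$\delta$ is large'' into the topological statement ``$\forbiddenvtxgraph{S}$ contains a large clique subdivision,'' with the right quantitative dependence $\Delta \leqslant 2t^{t/2}$. The exponent $t/2$ strongly suggests the intended route passes through a counting/pigeonhole argument on antichains or on the number of distinct coefficient values, where a set of size $t$ yields roughly $2^{t}$ subsets but only $t^{t/2}$ ``balanced'' ones (à la Sperner), so I would look for a family of $0/1$-points indexed by subsets of a $t$-element index set, pairwise connected by disjoint paths in $\bar S$, forcing $t' \geqslant$ roughly $\log$-type function of $\Delta$ and hence $\Delta \leqslant 2t^{t/2}$. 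I would first nail down the hypercube-to-$K_{d+1}$-subdivision lemma cleanly, since both halves of the proof rely on some version of it, and then carefully engineer the path system for the gap bound.
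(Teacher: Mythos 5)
Your first half (the notch bound) is essentially the paper's own argument: a $(\pitch-1)$-dimensional face of the cube disjoint from $S$ gives a copy of the $(\pitch-1)$-cube inside $\forbiddenvtxgraph{S}$, and the $d$-cube contains a $K_{d+1}$-subdivision with the weight-$\leqslant 1$ vectors as branch vertices and the weight-$2$ vectors as subdivision vertices (exactly the routing you sketch); just note that the edges ``among basis vectors'' are not cube edges, so the detour through weight-$2$ vertices is mandatory, not optional.

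The second half has a genuine gap: your step~(i), namely extracting from a single inequality with large right-hand side a clique subdivision in $\forbiddenvtxgraph{S}$ whose size grows with $\delta$, is precisely the missing idea, and as formulated it is not the right mechanism. The set of $0/1$-points violating the inequality can be tiny no matter how large $\delta$ is (e.g.\ when every coefficient is close to $\delta/2$, the violating points are only the empty set, singletons and a few pairs), so no clique subdivision of size tending to infinity with $\delta$ can be read off from them directly; also the exponent $t/2$ has nothing to do with Sperner-type counting. The paper uses the absence of a $K_{t+2}$-subdivision exactly once and only for the \emph{small} coefficients: normalizing $\delta=1$, if more than $t$ coefficients were $<1/2$, then the empty set, those singletons and those pairs would all violate the inequality, hence lie in $\bar S$, and they form a $K_{t+2}$-subdivision (your intended construction, but applied to $I_{<1/2}$, not to all violators); so $|I_{<1/2}|\leqslant t$. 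The bound on $\Delta$ then comes from linear algebra, not combinatorics: because the bounding hyperplane is spanned by $0/1$-points and each such point can use at most two coordinates with coefficient $\geqslant 1/2$, the coefficient vector $c$ is the unique solution of a linear system which, after eliminating the coordinates in $I_{\geqslant 1/2}$, has a square $\{-1,0,1\}$-block $E$ of order at most $t$; Cramer's rule shows every $c_i$ is an integer multiple of $1/(2|\det E|)$, and the Hadamard bound gives $2|\det E|\leqslant 2t^{t/2}$, so scaling by this common denominator yields a description with all right-hand sides at most $2t^{t/2}$. Without this ``spanned by $0/1$-points $\Rightarrow$ bounded denominator via Cramer/Hadamard'' step (or some substitute for it), your plan does not yield the claimed bound, and you yourself flag step~(i) as unresolved.
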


\begin{proof}
Note that the $d$-dimensional cube contains a subdivision of $K_{d+1}$, where the branch vertices are the vectors with support at most $1$, and the subdivision vertices are the vectors with support $2$.  Now, since $\forbiddenvtxgraph{S}$ contains a subgraph isomorphic to the $(\pitch-1)$-dimensional cube, it contains a subdivision of $K_p$ and we have $t \geqslant \pitch - 1$.


To show $\Delta \leqslant 2t^{t/2}$, observe that it suffices to prove the following. For any hyperplane $H := \{x \in \R^n \mid \sum_{i \in I} c_i x_i + \sum_{j \in [n] \setminus I} c_j (1-x_j) = 1 \}$ that is spanned by $0/1$-points such that $\sum_{i \in I} c_i x_i + \sum_{j \in [n] \setminus I} c_j (1-x_j) \geqslant 1$ is valid for $S$ and $c_1, \ldots, c_n \in \Q_{\geqslant 0}$, there exists some integer number $ K \in [1,2 t^{t/2}]$ such that every $ c_i $ is an integer multiple of $ 1/K $.

By switching the coordinates indexed by $[n] \setminus I$, we may assume that $ I = [n] $.
Define $I_{<1/2} := \{ i \in [n] \mid c_i < 1/2 \}$, and $ I_{=1/2} $, $ I_{>1/2} $ similarly. We have that $|I_{<1/2}| \leqslant t$ since otherwise $\forbiddenvtxgraph{S}$ contains a subdivision of a clique of size $t+2$ whose branch vertices are the characteristic vectors of the empty set $\emptyset$ and the singletons $\{i\}$ for $i \in I_{<1/2}$ and whose subdivision vertices are the characteristic vectors of the pairs $\{i,j\}$ for $i, j \in I_{<1/2}$.


Let $ x \in \{0,1\}^n \cap H $ and denote by $ T $ its support.
Then one of the following holds:
(i) $ |T \cap I_{=1/2}| = |T \cap I_{>1/2}| = 0 $,
(ii) $ |T \cap I_{=1/2}| = 1 $ and $|T \cap I_{>1/2}| = 0 $,
(iii) $ |T \cap I_{=1/2}| = 0 $ and $|T \cap I_{>1/2}| = 1 $,
 or (iv) $ |T \cap I_{=1/2}| = 2 $ and $|T \cap I_{>1/2}| = 0 $.
Thus, the vector $ c $ is the unique solution of a system of linear equations of the following form
\[
    \begin{pmatrix}
        A &     &  \\
        B & *   &  \\
        C &     & D \\
          & \id &  \\
    \end{pmatrix} c = b,
\]
where the coefficient matrix has integer entries,
$ A,B,C $ are $ 0/1 $-matrices with columns indexed by $ I_{<1/2} $, $ \id $ is an identity matrix with columns indexed by $ I_{=1/2} $, $ D $ is a $ 0/1 $-matrix with columns indexed by $ I_{>1/2} $ and exactly one $ 1 $ per row, and $ b $ is a column vector with entries in $ \{1/2, 1\} $.   We have used the convention that $*$-entries can have arbitrary values (that we do not care about) and blank entries always have value $0$.  The last rows of the above system are meant to be the trivial equations $c_i = 1/2 $, which are obviously valid for all $ i \in I_{=1/2} $.

Since every row in $ D $ contains exactly one $ 1 $, we can perform elementary row operations to obtain an equivalent system of the form
\[
        \begin{pmatrix}
        E  &   * &  \\
        *  &     & \id \\
           & \id &
    \end{pmatrix} c = b',
\]
where the coefficient matrix has integer entries,
$ E $ is a matrix with entries in $ \{-1,0,1\} $ and columns indexed by $ I_{<1/2} $, and $ b' $ a column vector with entries in $ \{ 0,1/2,1 \} $.
By removing some rows in the topmost block, we may assume that the coefficient matrix is an invertible $ n \times n $-matrix whose determinant is $ \pm \det(E) $.
Thus, by Cramer's rule, every $ c_i $ is an integer multiple of $ \frac{1}{2 |\det(E)|} $.
Since $ E $ is a matrix with entries in $ \{-1,0,1\} $ and $|I_{<1/2}| \leqslant t $ columns and rows, by the Hadamard bound we obtain
\[
    K = 2 |\det(E)| \leqslant 2 t^{\frac{1}{2}t},
\]
as claimed. 
\end{proof}

\section{Proof of Main Theorem}
\label{secProof}

\begin{lem}
    \label{lemCGRankCoverInequalities}
    Let $ R \subseteq [0,1]^n $ be a polytope and $ I,J \subseteq [n] $ with $ I \cap J = \emptyset $ such that
    \begin{equation}
        \label{eqInequalityCutOffFace}
        \sum_{i \in I} x_i + \sum_{j \in J} (1-x_j) \geqslant 1
    \end{equation}
    holds for every $ x \in R \cap \{0,1\}^n $.
    Then~\eqref{eqInequalityCutOffFace} is also valid for $ R^{(n + 1 - (|I| + |J|))} $.
\end{lem}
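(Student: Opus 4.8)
The plan is to prove the statement by induction on $k := |I| + |J|$, showing that \eqref{eqInequalityCutOffFace} is valid for $R^{(n+1-k)}$. The base case is $k = n+1$: then $I, J$ partition $[n]$, so the inequality $\sum_{i \in I} x_i + \sum_{j \in J}(1-x_j) \geqslant 1$ simply asks that $x \in \{0,1\}^n$ not equal the single point $a$ with $a_i = 0$ for $i \in I$ and $a_j = 1$ for $j \in J$; since this holds on $R \cap \{0,1\}^n$, and $R \subseteq [0,1]^n$ is bounded, the inequality is valid for $R^{(0)} = R$ after at most one CG-step — actually we need it for $R^{(n+1-(n+1))} = R^{(0)} = R$, and here a short separate argument is needed: the hyperplane $\sum_{i\in I} x_i + \sum_{j\in J}(1-x_j) = 0$ contains no point of $R$ other than possibly $a \notin R$, so a rounding argument over the cube gives validity directly. (One can alternatively take the convenient base case $k = n+1$ where the claim is that $R$ itself satisfies it, which follows since $R \subseteq [0,1]^n$ and the only $0/1$-point violating it is excluded.)

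For the inductive step, suppose the claim holds for all index sets of total size $k+1$ and let $|I|+|J| = k \leqslant n$. Pick a coordinate $\ell \in [n] \setminus (I \cup J)$. Consider the two faces of $[0,1]^n$ obtained by fixing $x_\ell = 0$ and $x_\ell = 1$; call the corresponding index sets $(I, J \cup \{\ell\})$ and $(I \cup \{\ell\}, J)$, each of total size $k+1$. On the face $\{x_\ell = 0\}$, every $0/1$-point of $R$ satisfies $\sum_{i\in I} x_i + \sum_{j \in J}(1-x_j) \geqslant 1$ (it even satisfies the stronger statement with $(1-x_\ell) = 1$ added), so by the inductive hypothesis applied to the polytope $R \cap \{x_\ell = 0\}$ living in the $(n-1)$-cube, this face of $R^{(n-k)}$ (using that taking a face commutes appropriately with CG-closures, or more precisely that the face of the $(n+1-(k+1))$-th closure contains the $(n-k)$-th closure of the face) satisfies the inequality. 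The symmetric argument handles $\{x_\ell = 1\}$. Now I want to combine the validity on the two opposite faces into validity on all of $R^{(n+1-k)}$ after one more CG-step.

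The combination step is the crux. The idea: we have shown \eqref{eqInequalityCutOffFace} holds on $R^{(n-k)} \cap \{x_\ell = 0\}$ and on $R^{(n-k)} \cap \{x_\ell = 1\}$ — more carefully, that $R^{(n-k)}$ satisfies the two inequalities $\sum_{i\in I} x_i + \sum_{j\in J}(1-x_j) + (1-x_\ell) \geqslant 1$ and $\sum_{i\in I} x_i + \sum_{j\in J}(1-x_j) + x_\ell \geqslant 1$ (these are exactly the face inequalities for the larger index sets, which we've proved valid for $R^{(n-k)} = R^{(n+1-(k+1))}$). Adding these two inequalities and dividing by $2$ gives $\sum_{i\in I} x_i + \sum_{j\in J}(1-x_j) + \tfrac12 \geqslant 1$, i.e. $\sum_{i\in I} x_i + \sum_{j\in J}(1-x_j) \geqslant \tfrac12$, valid for $R^{(n-k)}$. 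Rounding up the right-hand side (a single CG-cut with coefficient vector supported on $I \cup J$), we get $\sum_{i\in I} x_i + \sum_{j\in J}(1-x_j) \geqslant 1$ valid for $(R^{(n-k)})' = R^{(n-k+1)} = R^{(n+1-k)}$. This closes the induction. The main obstacle to watch is making rigorous the interaction between faces and CG-closures — specifically the fact that for a face $F$ of $[0,1]^n$, one has $(R \cap F)^{(t)} \supseteq$ (is contained in) the face $R^{(t)} \cap F$ in the sense needed, equivalently that any inequality valid for $(R\cap F)^{(t)}$ and "liftable" via the face-defining equation is valid for $R^{(t)}$; this is a standard fact (cf. \cite[Lem.~5.17]{CCZ2014}) but must be invoked in the right direction, and I would state it cleanly as a preliminary observation before running the induction.
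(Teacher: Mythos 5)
Your induction is sound and is a genuinely different route from the paper's proof. The paper considers the face $F = \{x \in R : x_i = 0 \ (i \in I),\ x_j = 1 \ (j \in J)\}$, notes it is lattice-free of dimension at most $n - (|I|+|J|)$, invokes the Eisenbrand--Schulz bound to get that its CG-closures become empty, transfers this to $R^{(k)} \cap F = \emptyset$ via the face/closure lemma of \cite{CCZ2014}, and finishes with one more CG round. Your downward induction on $k = |I|+|J|$ --- apply the hypothesis to the two enlarged pairs $(I \cup \{\ell\}, J)$ and $(I, J \cup \{\ell\})$, average the two resulting inequalities to get right-hand side $\tfrac12$, and round --- re-derives exactly the special case of the Eisenbrand--Schulz lemma that is needed, so it is self-contained and avoids both citations. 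Note also that the ``obstacle'' you flag at the end is not an obstacle at all: you never need to compare $(R \cap F)^{(t)}$ with $R^{(t)} \cap F$, because the enlarged inequalities are valid for all of $R \cap \{0,1\}^n$ (the added term $x_\ell$, respectively $1 - x_\ell$, is nonnegative on the cube), so the induction hypothesis applies directly to $R$ itself with index sets of total size $k+1$.

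Two corrections to the base case, however. Since $I$ and $J$ are disjoint subsets of $[n]$, the case $k = n+1$ does not exist; the base case is $k = n$, where $I \cup J = [n]$, and the target is $R^{(n+1-n)} = R^{(1)}$, not $R^{(0)} = R$. Your parenthetical claim that $R$ itself satisfies \eqref{eqInequalityCutOffFace} because the unique violating $0/1$-point $a$ is excluded is false: for $I = [n]$, $J = \emptyset$ and $R = \{x \in [0,1]^n : \sum_{i} x_i \geqslant \tfrac12\}$, the inequality $\sum_i x_i \geqslant 1$ holds at every $0/1$-point of $R$ but fails at $(\tfrac12,0,\dotsc,0) \in R$. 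Fortunately, the argument in your main text is the right one for the correct base case: on $[0,1]^n$ the left-hand side of \eqref{eqInequalityCutOffFace} is nonnegative and vanishes only at $a$, and $a \notin R$ because $a$ is a $0/1$-point violating \eqref{eqInequalityCutOffFace}; since $R$ is compact, the minimum of the left-hand side over $R$ is strictly positive, so a single CG cut yields \eqref{eqInequalityCutOffFace} for $R^{(1)}$. With the base case indexed and argued this way, your induction goes through exactly as you describe.
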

\begin{proof}
    Consider the set
    \[
        F := \{ x \in R \mid x_i = 0 \ \forall \, i \in I, \, x_j = 1 \ \forall \, j \in J \},
    \]
    which is a face of $ R $ of dimension $ k \leqslant n - (|I| + |J|) $.
    Since~\eqref{eqInequalityCutOffFace} is valid for $ R \cap \{0,1\}^n $, we have $ F \cap \Z^n = \emptyset $.
    Since $ F \subseteq [0,1]^n $, this implies $ F^{(k)} = \emptyset $ (by~\cite[Lem.~2.2]{ES2003}).
    This implies $ R^{(k)} \cap F = \emptyset $ (see~\cite[Lem.~5.17]{CCZ2014}) and hence there exists an $ \varepsilon > 0 $ such that
    \[
        \sum_{i \in I} x_i + \sum_{j \in J} (1-x_j) \geqslant \varepsilon
    \]
    is valid for $ R^{(k)} $.  This means that~\eqref{eqInequalityCutOffFace} holds for $ R^{(k + 1)} $, as claimed.
\end{proof}

\begin{proof}[Proof of Theorem~\ref{thmMain}]
By the definition of $ \Delta $, we can find a description of $ \conv(R \cap \{0,1\}^n) $ by means of linear inequalities where every inequality is of the form
\begin{equation}
    \label{eqTargetInequality}
    \sum_{i \in I} c_i x_i + \sum_{j \in J} c_i (1-x_i) \geqslant \delta
\end{equation}
for some $ I,J \subseteq [n] $ with $ I \cap J = \emptyset $, where $ \delta \in \Z_{\geqslant 0} $, $ c_i \in \Z_{\geqslant 1} $ for all $ i \in I \cup J $, and $ \delta \leqslant \Delta $.
Note that every such inequality with $ \delta = 0 $ is already valid for $ R $.
For inequalities with $ \delta \geqslant 1 $, we may assume that $ c_i \leqslant \delta $ holds for every $ i \in I \cup J $.

By induction on $ \delta \geqslant 1 $ we will show that~\eqref{eqTargetInequality} holds for every $ x \in R^{(\pitch + \delta - 1)} $, which then yields the claim.
If $ \delta = 1 $, then we have $ c_i = 1 $ for all $ i \in I \cup J $.
By Lemma~\ref{lemCGRankCoverInequalities}, we know that Inequality~\eqref{eqTargetInequality} is valid for $ R^{(t)} $, where $ t = n + 1 - (|I| + |J|) $.
It remains to show that $ t \leqslant \pitch + \delta - 1 = \pitch $.
To this end, consider the set
\[
    F = \{ x \in [0,1]^n \mid x_i = 0 \ \forall \, i \in I, \, x_j = 1 \ \forall \, j \in J \},
\]
which is a face of the cube, and note that no point of $ F $ satisfies~\eqref{eqTargetInequality}.
Thus, we indeed obtain $ \pitch \geqslant \dim(F) + 1 = n + 1 - (|I| + |J|) = t $.

Now let $ \delta \geqslant 2 $.
We may assume that $ |I| + |J| \geqslant 1 $, otherwise we can divide~\eqref{eqTargetInequality} by $ \delta $ and proceed by induction.
For every $ i_0 \in I $ consider the inequality
\[
    \sum_{i \in I \setminus \{i_0\}} c_i x_i + (c_{i_0} - 1) x_{i_0} + \sum_{j \in J} c_j (1 - x_j) \geqslant \delta - x_{i_0} \geqslant \delta - 1,
\]
which is valid for $ R \cap \{0,1\}^n $.
Similarly, for every $ j_0 \in J $ the inequality
\[
    \sum_{i \in I} c_i x_i + \sum_{j \in J \setminus \{j_0\}} c_j (1 - x_j) + (c_{j_0} - 1) (1 - x_{j_0}) \geqslant \delta - (1 - x_{j_0}) \geqslant \delta - 1
\]
is also valid for $ R \cap \{0,1\}^n $.
Thus, by the induction hypothesis, both such inequalities are valid for $ R^{(\pitch + \delta - 2)} $.
Summing these $ k := |I| + |J| $ many inequalities up and dividing them by $ k \geqslant 1 $, we obtain that
\[
    \sum_{i \in I} \left( c_i - \frac{1}{k} \right) x_i + \sum_{j \in J} \left( c_j - \frac{1}{k} \right) (1 - x_j) \geqslant \delta - 1
\]
is valid for $ R^{(\pitch + \delta - 2)} $.
Choose $ \varepsilon > 0 $ such that $ (1 + \varepsilon) (c_i - \frac{1}{k}) \leqslant c_i $ holds for all $ i \in I \cup J $.
Scaling the above inequality by $ (1 + \varepsilon) $, we thus obtain that
\begin{align*}
    \sum_{i \in I} c_i x_i + \sum_{j \in J} c_j (1 - x_j)
    & \geqslant (1+\varepsilon) \left( \sum_{i \in I} \left( c_i - \frac{1}{k} \right) x_i + \sum_{j \in J} \left( c_j - \frac{1}{k} \right) (1 - x_j) \right) \\
    & \geqslant (1 + \varepsilon) (\delta - 1),
\end{align*}
holds for every $ x \in R^{(\pitch + \delta - 2)} $, and hence
\[
    \sum_{i \in I} c_i x_i + \sum_{j \in J} c_j (1 - x_j)
    \geqslant \lceil (1 + \varepsilon) (\delta - 1) \rceil \geqslant \delta
\]
is valid for $ R^{(\pitch + \delta - 1)} $, as claimed.
\end{proof}

\section{Approximating the Integer Hull when the Notch is Bounded} \label{sec:apx}

We have shown in Section~\ref{sec:bd_pitch_bad} that if we only assume that $\pitch$ is constant, it might take $\Omega(n / \log n)$ rounds of CG-cuts to converge to the integer hull: we have to control $\Delta$ also in order to guarantee bounded CG-rank. Here we prove that bounding $\pitch$ alone is in fact enough to obtain good \emph{approximations} of the integer hull after a bounded number of rounds. This is in contrast with the results of Singh \& Talwar \cite{ST2010}, who show that for many problems performing a constant number of rounds of CG-cuts does not significantly decrease the integrality gap.

\begin{cor} \label{corMain}
Let $S \subseteq \{0,1\}^n$ have notch~$\pitch$ and let $\varepsilon \in (0,1)$ be such that $\pitch \varepsilon^{-1} \in \Z_{\geqslant 0}$. For every $t \geqslant \pitch \varepsilon^{-1} - 1$ and for every inequality $\sum_{i \in I} c_i x_i + \sum_{j \in J} c_j (1 - x_j) \geqslant \delta$ that is valid for $\conv(S)$ with $\delta \geqslant c_1, \ldots, c_n \geqslant 0$, the inequality $\sum_{i \in I} c_i x_i + \sum_{j \in J} c_j (1 - x_j) \geqslant (1-\varepsilon)\delta$ is valid for $R^{(t)}$, where $R \subseteq [0,1]^n$ is any polytope such that $R \cap \{0,1\}^n = S$.
\end{cor}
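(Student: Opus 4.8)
The plan is to mimic the induction on $\delta$ used in the proof of Theorem~\ref{thmMain}, but now tracking a multiplicative loss $(1-\varepsilon)$ instead of insisting on the exact right-hand side. More precisely, I would prove by induction on $\delta$ (over all inequalities valid for $\conv(S)$ with nonnegative integer coefficients $c_i \leqslant \delta$) the statement: the inequality $\sum_{i\in I} c_i x_i + \sum_{j\in J} c_j(1-x_j) \geqslant (1-\varepsilon)\delta$ is valid for $R^{(t)}$ as soon as $t \geqslant \pitch \varepsilon^{-1} - 1$. First one reduces to integer coefficients by the usual argument (scaling and using that the gap description exists; here we only need that $\conv(S)$ has \emph{some} description by inequalities of the form \eqref{eq:gap}, which was established in the introduction), and one may assume $c_i \leqslant \delta$ for all $i$, as in the proof of Theorem~\ref{thmMain}.

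For the base of the induction I would handle all $\delta \leqslant \pitch \varepsilon^{-1}$ at once: when $\delta$ is this small, $(1-\varepsilon)\delta \leqslant \delta - \varepsilon\delta \leqslant \delta - \varepsilon\cdot 1 < \delta$ is a weaker inequality, and in fact $(1-\varepsilon)\delta \leqslant \delta-1$ whenever $\varepsilon\delta \geqslant 1$; but the cleanest base case is $\delta = 1$, where $c_i = 1$ for all $i$, the inequality is a cover inequality \eqref{eqInequalityCutOffFace}, and Lemma~\ref{lemCGRankCoverInequalities} combined with the notch bound (exactly as in the proof of Theorem~\ref{thmMain}) gives validity of $\sum x_i + \sum(1-x_j) \geqslant 1 \geqslant (1-\varepsilon)\cdot 1$ already for $R^{(\pitch)}$, hence for $R^{(t)}$ since $t \geqslant \pitch\varepsilon^{-1}-1 \geqslant \pitch - 1$. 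Wait — I should double-check that $\pitch\varepsilon^{-1}-1 \geqslant \pitch$, i.e. $\pitch(\varepsilon^{-1}-1)\geqslant 1$; this holds since $\varepsilon<1$ forces $\varepsilon^{-1}-1 \geqslant$ (something positive) and $\pitch \geqslant 1$ once $S \neq \{0,1\}^n$, with the degenerate cases $\pitch=0$ or $S$ empty handled trivially.

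For the inductive step with $\delta \geqslant 2$: if $I \cup J = \emptyset$ the inequality is $0 \geqslant (1-\varepsilon)\delta$, impossible for a valid inequality unless $\delta=0$, so we may assume $k := |I|+|J| \geqslant 1$. Exactly as in Theorem~\ref{thmMain}, for each $i_0 \in I$ (resp. $j_0 \in J$) the ``peeled'' inequality with right-hand side $\delta-1$ is valid for $R\cap\{0,1\}^n$, so by induction the inequality with right-hand side $(1-\varepsilon)(\delta-1)$ is valid for $R^{(t')}$ where $t' = t$ works since the threshold $\pitch\varepsilon^{-1}-1$ does not depend on $\delta$ — this is the key simplification over Theorem~\ref{thmMain}, we do not need $t$ to grow with $\delta$. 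Averaging the $k$ peeled inequalities and scaling up by $(1+\varepsilon')$ for suitable small $\varepsilon'>0$ to absorb the $-1/k$ correction terms into the original coefficients $c_i$, we get that $\sum c_i x_i + \sum c_j(1-x_j) \geqslant (1+\varepsilon')(1-\varepsilon)(\delta-1)$ is valid for $R^{(t)}$, and since $R^{(t)}=(R^{(t)})$ is already closed — i.e. we do not take a further CG-closure here — we just need $(1+\varepsilon')(1-\varepsilon)(\delta-1) \geqslant (1-\varepsilon)\delta$, equivalently $(1+\varepsilon')(\delta-1)\geqslant\delta$, which holds for $\varepsilon'$ large enough only if $\delta-1>0$; but $\varepsilon'$ must be \emph{small} (bounded by the coefficient constraint), so this naive averaging is too weak and the right move is instead to argue that the averaged inequality with RHS $(1-\varepsilon)(\delta-1)$ already implies, after scaling by $\tfrac{\delta}{\delta-1}\cdot\tfrac{c_i}{c_i-1/k}$-type factors, the target $(1-\varepsilon)\delta$ — here is where the real work lies.

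The main obstacle, therefore, is the arithmetic of the inductive step: one must choose the scaling factor so that the left-hand side is dominated by $\sum c_i x_i + \sum c_j(1-x_j)$ \emph{and} the right-hand side reaches $(1-\varepsilon)\delta$, and unlike in Theorem~\ref{thmMain} we cannot spend an extra CG-round to round up. The cleanest fix I would pursue is to not peel off a single unit but to peel off $\varepsilon\delta/\pitch$ worth at a time — i.e. set up the induction on $\lceil \varepsilon\delta\rceil$ or on $\delta$ in steps that lose a $(1-\pitch/\delta\cdot\text{something})$ factor — so that after $\pitch\varepsilon^{-1}$ rounds the accumulated multiplicative loss is exactly $(1-\varepsilon)$; concretely, I expect the right statement to be ``$\sum c_i x_i + \sum c_j(1-x_j)\geqslant \delta - r$ is valid for $R^{(\pitch + r - 1)}$ for every integer $0 \leqslant r \leqslant \delta$'' — no wait, that needs $\Delta$ bounded. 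The correct device is: prove $\sum c_i x_i+\sum c_j(1-x_j) \geqslant \delta\cdot(1-1/m)^{?}$... Rather than guess, the honest plan is: set up the induction so that going from a valid RHS of $\gamma$ to a valid RHS of $\gamma(1 - \pitch/(\gamma+\pitch))$-ish costs $\pitch$ CG-rounds (one application of Lemma~\ref{lemCGRankCoverInequalities}-style peeling down to a cover plus the notch argument), iterate $\varepsilon^{-1}$ times, and check the telescoping product of the loss factors is $\geqslant 1-\varepsilon$; the bookkeeping to make ``$\pitch$ rounds per factor'' and ``$\varepsilon^{-1}$ factors'' multiply out to the stated threshold $\pitch\varepsilon^{-1}-1$ is the delicate part, but it is purely a matter of choosing the per-step loss as an affine rather than multiplicative decrement.
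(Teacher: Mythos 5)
Your proposal does not close, and the point where it stalls is precisely where the paper's proof departs from the induction of Theorem~\ref{thmMain}. Two concrete problems. First, the opening reduction ``to integer coefficients by scaling'' is not available: the corollary allows arbitrary reals $0 \leqslant c_i \leqslant \delta$, and clearing denominators can blow up the right-hand side arbitrarily, while the number of rounds $t$ is fixed at $\pitch\varepsilon^{-1}-1$; you cannot fall back on the gap description either, since $\Delta$ is unbounded for bounded notch (Proposition~\ref{prop:badfacet}) --- controlling the size of the right-hand side without assuming bounded $\Delta$ is the whole content of the corollary. Second, as you yourself notice mid-argument, the inductive step of Theorem~\ref{thmMain} spends one CG round per unit decrease of the right-hand side: the rounding $\lceil (1+\varepsilon')(\delta-1)\rceil \geqslant \delta$ is what recovers the lost unit. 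With $t$ fixed, averaging plus a small scaling cannot do this, and your proposed repairs (peeling off $\varepsilon\delta/\pitch$ at a time, a telescoping product of per-step losses) are left as sketches whose bookkeeping is exactly the unresolved issue. So the submission is an honest but incomplete attempt: the key device is missing.

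That device, in the paper, is a coefficient-rounding step rather than a modified induction. After flipping and scaling one may take $J=\emptyset$ and $\delta=1$; set $K:=\pitch\varepsilon^{-1}\in\Z_{\geqslant 0}$ and replace each $c_i$ by $\tilde c_i:=\tfrac{1}{K}\lfloor K c_i\rfloor$. The notch is used once: every $x\in S$ dominates some $y\in S$ of support at most $\pitch$ (otherwise some $\pitch$-dimensional face of the cube would miss $S$), whence $\sum_i \tilde c_i x_i \geqslant \sum_i \tilde c_i y_i \geqslant \sum_i c_i y_i - \pitch/K \geqslant 1-\varepsilon$ for all $x\in S$. Multiplying by $K$ gives an inequality with nonnegative \emph{integer} coefficients and right-hand side $K-\pitch$, and the induction from the proof of Theorem~\ref{thmMain} (which needs no gap hypothesis when applied to a single integer inequality) shows it is valid for $R^{(\pitch+(K-\pitch)-1)}=R^{(K-1)}\supseteq\text{(contains) } R^{(t)}$ for $t\geqslant K-1$; since $\tilde c_i\leqslant c_i$ and $x\geqslant\zerovec$ on $[0,1]^n$, the original inequality with right-hand side $1-\varepsilon$ follows on $R^{(t)}$. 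In short, the approximation loss is pushed into the coefficients (denominator $K$, controlled via the notch) so that the existing induction can be invoked with a bounded right-hand side --- this is the idea your attempt lacks, and without it the per-unit peeling scheme cannot meet the fixed round budget.
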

\begin{proof}
After flipping some coordinates, we may assume that $J = \emptyset$. After scaling, we may further assume that $\delta = 1$. Let $K := \pitch \varepsilon^{-1}$. Consider the valid inequality $\sum_{i \in I} \tilde{c}_i x_i \geqslant \tilde{\delta}$ where $\tilde{c}_i := \frac{1}{K} \lfloor K c_i \rfloor \in \{0,1/K,2/K,\ldots,1\}$ and $\tilde{\delta} := \min \{\sum_{i \in I} \tilde{c}_i x_i \mid x \in S\}$. We claim that $\tilde{\delta} \geqslant 1-\varepsilon$. Indeed, let $x \in S$ be arbitrary and let $y \in S$ be such that $0 \leqslant y \leqslant x$ and $y$ has support on at most $\pitch$ coordinates. Then
\begin{align*}
\sum_{i \in I} \tilde{c}_i x_i
&\geqslant \sum_{i \in I} \tilde{c}_i y_i\\
&= \sum_{i \in I} \frac{1}{K} \lfloor K c_i \rfloor y_i\\
&\geqslant \sum_{i \in I} \frac{1}{K} (K c_i - 1) y_i\\
&\geqslant \sum_{i \in I} c_i y_i - \frac{\pitch}{K}\\
&\geqslant 1 - \frac{\pitch}{K} = 1 - \varepsilon\,
\end{align*}
so that $\tilde{\delta} \geqslant 1-\varepsilon$. Now consider the valid inequality $\sum_{i \in I} K \tilde{c}_i x_i \geqslant K (1 - \varepsilon) = K - \pitch$ with nonnegative integer coefficients. From the proof of Theorem~\ref{thmMain}, we see that this inequality is valid for the $t$-th CG-closure of $R$ since $t \geqslant K - 1 = (K - \pitch) + \pitch - 1$.
\end{proof}

\section{The notch-$3$ case}
\label{secPitchThree}

\begin{thm} \label{thm:PitchThree}
Let $S \subseteq \{0,1\}^n$ have notch $\pitch \leqslant 3$. Then $P = \conv(S)$ can be defined by $0 \leqslant x_i \leqslant 1$ for $i \in [n]$ together with inequalities that can be brought in the following form after flipping some coordinates, where for each inequality the subsets of indices are a partition of $[n]$ (we allow empty sets in the partition):
%
\begin{alignat}{13}
\label{eqFacetType1} \sum_{i \in I_0} 0 x_i &\; +\; &\sum_{i \in I_1} 1 x_i &&&&&&&&&\geqslant 1,\quad & |I_0| = 2\\
\label{eqFacetType2} \sum_{i \in I_0} 0 x_i &\; +\;  &\sum_{i \in I_1} 1 x_i &\; +\; &\sum_{i \in I_2} 2 x_i &&&&&&&\geqslant 2, & |I_0| \leqslant 1\\
\label{eqFacetType3} &&\sum_{i \in I_1} 1 x_i &\; +\; & \sum_{i \in I_2} 2 x_i &\; +\; & \sum_{i \in I_3} 3 x_i &&&&&\geqslant 3, & |I_1|\geqslant3\\
\label{eqFacetType4} &&\sum_{i \in I_1} 1 x_i &\; +\; &\sum_{i \in I_2} 2 x_i &\; +\; & \sum_{i \in I_3} 3 x_i &\; +\; &\sum_{i \in I_4} 4 x_i &&&\geqslant 4, &|I_1|=2, |I_2| \geqslant 1\\
\label{eqFacetType6} &&&&\sum_{i \in I_2} 2 x_i &\; +\; &\sum_{i \in I_3} 3 x_i &\; +\; &\sum_{i \in I_4} 4 x_i &\; +\; &\sum_{i \in I_6} 6 x_i &\geqslant 6, & |I_2| \geqslant 3
\end{alignat}
In particular, $S$ has \gap{} $\Delta \leqslant 6$.
\end{thm}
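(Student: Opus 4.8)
The plan is to reduce the statement to classifying the facets of $P=\conv(S)$ other than the box facets $x_i\geqslant 0$, $x_i\leqslant 1$; the bound $\Delta\leqslant 6$ is then immediate, because the right-hand sides occurring in \eqref{eqFacetType1}--\eqref{eqFacetType6} are $1,2,3,4,6$. We may assume $\conv(S)$ is full-dimensional — otherwise realize it as a face of a full-dimensional $0/1$-polytope $\conv(T)$ with $S\subseteq T$, which still has notch at most $\mathrm{notch}(S)\leqslant 3$, and argue with $\conv(T)$ — so that each non-box facet, being $(n-1)$-dimensional with vertices in $\{0,1\}^n$, has a bounding hyperplane spanned by $0/1$-points (as in the remark after the definition of the gap). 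Fix such a facet and write it, after flipping coordinates, as $\sum_{i=1}^n c_ix_i\geqslant\delta$ with $c\in\Z_{\geqslant 0}^n$, $\delta\in\Z_{\geqslant 1}$, and $H:=\{x:\sum_ic_ix_i=\delta\}$ spanned by $0/1$-points. Two normalizations are free: dividing by the gcd leaves $H$ unchanged, so the positive $c_i$ together with $\delta$ may be taken coprime; and $c_i\leqslant\delta$ for every $i$, since a value $1$ in coordinate $i$ would push the partial sum above $\delta$, forcing every $0/1$-point of $H$ to have $i$-th coordinate $0$, i.e.\ $H\subseteq\{x_i=0\}$ — impossible when $c_i>0$.

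Next I would extract what $\pitch\leqslant 3$ gives. Every $0/1$-point with $\sum_ic_ix_i<\delta$ lies in $\bar{S}$, so $B:=\{x\in\{0,1\}^n:\sum_ic_ix_i\leqslant\delta-1\}$ contains no $3$-dimensional subcube; since the subcube obtained by freeing a coordinate set $U$ and zeroing the rest lies in $B$ exactly when $\sum_{i\in U}c_i\leqslant\delta-1$, this is \emph{equivalent} to: the three smallest entries of $c$ sum to at least $\delta$. Immediate consequences: at most two coordinates carry a zero coefficient, and if exactly two do then the three smallest entries are $0,0,c_{\min}$, forcing every positive $c_i$ to equal $\delta$ and hence $\delta=1$ by coprimality; these facets are the cover inequalities \eqref{eqFacetType1}.

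The core of the proof is then a finite case analysis of the remaining facets, driven by the three constraints $1\leqslant c_i\leqslant\delta$, ``the three smallest $c_i$ sum to $\geqslant\delta$'', and ``$H$ is spanned by $0/1$-points''. The first two already confine the multiset $\{c_i\}$ to a band around $\delta$; the spanning condition is the workhorse. Its crude use: if a value $v>0$ appears in no sub-multiset of $\{c_i\}$ summing to $\delta$, then every $0/1$-point of $H$ vanishes on the coordinates of value $v$, contradicting $c_v\neq 0$. Its refined use: tracking which larger value a coordinate of the \emph{smallest} positive value can be combined with to reach the sum $\delta$, one finds that if there are too few such coordinates then a linear equation not proportional to $c$ holds on all of $H\cap\{0,1\}^n$, again contradicting that $H$ is $0/1$-spanned. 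Pushing this through, organized by $\delta$, forces the cases $\delta=2$ (coefficients in $\{0,1,2\}$ with at most one $0$, type \eqref{eqFacetType2}), $\delta=3$ (coefficients in $\{1,2,3\}$ with at least three $1$'s, type \eqref{eqFacetType3}), $\delta=4$ (coefficients in $\{1,2,3,4\}$ with exactly two $1$'s and at least one $2$, type \eqref{eqFacetType4}), $\delta=6$ (coefficients in $\{2,3,4,6\}$ — no $1$ and no $5$ — with at least three $2$'s, type \eqref{eqFacetType6}), while $\delta=5$ and every $\delta\geqslant 7$ are excluded. Since every non-box facet is then of one of the forms \eqref{eqFacetType1}--\eqref{eqFacetType6}, adding the box constraints $0\leqslant x_i\leqslant 1$ (valid in any case) yields the claimed description, and $\Delta\leqslant 6$ follows at once.

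The main obstacle is precisely this case analysis. The qualitative shape — which coefficient values are possible, and roughly how many of each — comes out quickly, but excluding $\delta=5$ and all $\delta\geqslant 7$, and upgrading the crude bounds on the cardinalities $|I_j|$ to the exact side conditions stated, requires the ``spurious linear relation'' argument applied carefully, case by case, which is where the bookkeeping becomes delicate. A secondary, routine point is the treatment of small ambient dimension ($n\leqslant 3$), where some of the listed forms degenerate and some box inequalities fail to be facets; this is harmless since one always keeps the full set of box constraints in the description.
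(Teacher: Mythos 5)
Your framework is reasonable: reducing to valid inequalities with nonnegative integer coefficients and $c_i\leqslant\delta$, recasting notch $\leqslant 3$ as ``the three smallest coefficients sum to at least $\delta$'', and exploiting that the bounding hyperplane is spanned by $0/1$-points is exactly the right raw material (and your treatment of the non-full-dimensional case can be repaired, since your intended analysis only uses validity for $S$ plus the spanning property, which also covers the reversed inequalities coming from facets of $\conv(T)$ that contain $\conv(S)$). However, there is a genuine gap: the classification itself is the theorem, and you do not prove it. Excluding $\delta=5$ and all $\delta\geqslant 7$, pinning the coefficient values to the patterns in \eqref{eqFacetType1}--\eqref{eqFacetType6}, and establishing the side conditions on $|I_0|,|I_1|,|I_2|$ are all deferred to a case analysis that you only describe qualitatively (``pushing this through, organized by $\delta$, forces the cases\dots'') and yourself identify as the unresolved obstacle. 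Worse, as sketched the analysis is not even finite a priori: $\delta$ is unbounded until the theorem is proved (bounding it \emph{is} the assertion $\Delta\leqslant 6$), and neither your ``crude'' use of the spanning condition (every positive value must lie in some sub-multiset summing to $\delta$) nor the ``refined'' one (too few combinable small coordinates yield a spurious linear relation on $H\cap\{0,1\}^n$) is shown to bound $\delta$ or to make the case distinction terminate.

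What is missing is a structural rigidity statement that converts ``$H$ is spanned by $0/1$-points'' into a finite amount of information. The paper obtains this by normalizing to $\delta=1$ with rational coefficients and observing that $c^*$ is the unique solution of a nonredundant system of equations of the form $c_i=0$, $c_i-c_j=0$, $c_i+c_j=1$, plus at most one equation of support $3$ (equations of support $\geqslant 4$ are excluded because the three smallest entries already sum to at least $1$). The support-$\leqslant 2$ equations are encoded in a graph on $[n]$ whose components contain at most one (unbalanced) cycle by nonredundancy; this forces the coefficient values to lie in $\{0,\alpha,\tfrac12,1-\alpha,1\}$, and the single support-$3$ equation then pins $\alpha$ to $\tfrac13$ or $\tfrac14$, from which the five types and their cardinality conditions drop out in a short, genuinely finite case analysis. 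Nothing playing the role of this lemma appears in your proposal, so the heart of the argument remains to be supplied.
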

\begin{proof}
We may assume that $n \geqslant 3$, otherwise the theorem holds trivially. Thus, $S$ is nonempty. Pick any nonredundant inequality description of $\conv(S)$ such that the corresponding hyperplanes are spanned by $0/1$-points. Let $(c^{*})^\intercal x \geqslant \delta$ be any inequality in this description which is not of the form $x_i \geqslant 0$ or $1-x_i \geqslant 0$. By flipping coordinates and scaling we may assume that $c_i^* \in \Q_{\geqslant 0}$ and $\delta=1$.  We choose a non-redundant system that uniquely defines $c^*$ consisting of equations of the form $c_i=0, c_i-c_j=0$, and $\sum_{i \in I \subseteq [n]} c_i=1$ such that equations of lower support are always included before equations of higher support.  In particular, this implies that equations of the form $c_i=0$ or $c_i=1$ are always included if $c_i^*=0$ or $c_i^*=1$.

Sort the entries of $c^*$ as $c_1^* \leqslant c_2^* \leqslant \dots \leqslant c_n^*$.  Clearly, since $S$ has notch at most $3$,
$c_1^*+c_2^*+c_3^* \geqslant 1$. Hence, no equation with support greater than $3$ is valid (since the $c_i^*$ are sorted). If $c_1^*+c_2^*+c_3^*=1$, then any equation whose support has size 3
is already implied by the equation $c_1+c_2+c_3=1$ together with equations of the form $c_i-c_j=0$ for $i \in [3]$.  If $c_1^*+c_2^*+c_3^*>1$, then no equation of the form $c_i + c_j + c_k = 1$ will appear.  Thus, at most one equation of support $3$ appears.  

 Define a graph $G=([n], E)$, where $E:=\{ij: c_i+c_j=1 \text{ or } c_i-c_j=0\}$. Let $\Sigma:=\{ij: c_i+c_j=1\}$ and define a cycle of $G$ to be \emph{unbalanced} if it contains an odd number of edges of $\Sigma$.  
Since the system is non-redundant, each component of $G$ contains at most one cycle, which will have to be unbalanced.
For each $\gamma \in [0,1]$, let $J_\gamma:=\{i \in [n] : c_i^*=\gamma\}$.  Note that $|J_0| \leqslant 2$, as $c_1^*+c_2^*+c_3^*\geqslant 1$. 
Let $J_{\frac{1}{2}}'$ be the set of vertices of $G$ contained in a component with an unbalanced cycle.  Clearly, $J_{\frac{1}{2}}' \subseteq J_{\frac{1}{2}}$.

Let $T_1, \dots T_\ell$ be the components of $G$ which contain at least one edge and no cycles. Note that if $\ell \geqslant 2$, then the set of solutions of the system obtained by removing the single equation of the form $c_i + c_j + c_k = 1$ (if it exists) has dimension at least $2$.  Thus, the solution set of the full system has dimension at least $1$, which contradicts the uniqueness of $c^*$.  Therefore, $\ell \leqslant 1$. We may partition the vertices of $T_1$ as $J_{\alpha}' \cup J_\beta'$ where $c_i^*=\alpha$ for all $i \in J_\alpha'$ and $c_i^*=1-\alpha:=\beta$ for all $i \in J_\beta'$.  Note that if $\alpha=0$, then $J_\alpha' \subseteq J_0$ and $J_\beta' \subseteq J_1$, and if $\alpha=\frac{1}{2}$, then $J_{\alpha}' \cup J_{\beta}' \subseteq J_{\frac{1}{2}}$.   
 
It follows that $[n]:=J_0 \cup J_\alpha \cup J_{\frac{1}{2}} \cup J_{\beta} \cup J_1$, for some $0<\alpha<\frac{1}{2}$ and $\beta:=1-\alpha$ (some of these sets are possibly empty).  There are now various cases to consider depending on where the indices of the single equation $c_i + c_j + c_k = 1$ (if it exists) belong.  

First suppose that there does not exist an equation of the form $c_i + c_j + c_k = 1$. 
In this case, by the uniqueness of $c^*$, we must have $J_\alpha=J_\beta=\emptyset$.  If $|J_0|=2$, then $J_{\frac{1}{2}}=\emptyset$ and $J_{1} \neq \emptyset$, so we get (\ref{eqFacetType1}) with $(I_0,I_1)=(J_0,J_1)$.  If $|J_0| \leqslant 1$, we get (\ref{eqFacetType2}) with $(I_0, I_1, I_2)=(J_0,J_{\frac{1}{2}},J_1)$.

We may hence assume there does exist an equation of the form $c_i + c_j + c_k = 1$ (with $i<j<k$).  We may further assume that $\{i,j,k\} \cap J_0=\emptyset$, because otherwise, the equation $c_i + c_j + c_k = 1$ is implied by the lower support equations $c_i=0$ and $c_j+c_k=1$.  Similarly, $\{i,j,k\} \cap J_1=\emptyset$.

Suppose $\{i,j,k\} \subseteq J_\alpha$. This implies that $\alpha=\frac{1}{3}$ and, since $c_1^*+c_2^*+c_3^*\geqslant 1$, $J_0=\emptyset$ and $|J_{\frac{1}{3}}|\geqslant 3$. If $J_{\frac{1}{2}}=\emptyset$ then we get (\ref{eqFacetType3}) with $(I_1,I_2,I_3)=(J_{\frac{1}{3}}, J_{\frac{2}{3}}, J_1)$. If $J_{\frac{1}{2}} \neq \emptyset$, then we get (\ref{eqFacetType6}) with $(I_2,I_3,I_4,I_6)=(J_{\frac{1}{3}}, J_{\frac{1}{2}}, J_{\frac{2}{3}}, J_1)$.

Suppose $\{i,j\}\subseteq J_\alpha$ and $k\in  J_{\frac{1}{2}}$. This implies $\alpha=\frac{1}{4}$, and since $c_1^*+c_2^*+c_3^*\geqslant 1$, we have $J_0=\emptyset$, $|J_\alpha|=2$, and $|J_{\frac{1}{2}}| \geqslant 1$. So, we get (\ref{eqFacetType4}) with  $(I_1,I_2,I_3,I_4)=(J_{\frac{1}{4}}, J_{\frac{1}{2}}, J_{\frac{3}{4}}, J_1)$.

Suppose $\{i,j\}\subseteq J_\alpha$ and $k \in J_{\beta}$. This implies $2\alpha+(1-\alpha) = 1$, and so $\alpha=0$.   This contradicts $\alpha>0$.  

Finally if $|\{i,j,k\} \cap J_{\alpha}| \leqslant 1$, then $c_i^*+c_j^*+c_k^*>1$, which is a contradiction.
\end{proof}

Applying Theorem~\ref{thmMain}, we obtain the following result.

\begin{cor}
    Let $ S \subseteq \{0,1\}^n $ be a set with notch at most $3$.
    Then the CG-rank of every polytope $R \subseteq [0,1]^n $ with $R \cap \{0,1\}^n=S$ is at most $8$.
\end{cor}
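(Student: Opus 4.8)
The plan is to combine the two main theorems of the paper. First I would dispose of the trivial case $S=\{0,1\}^n$: then the only polytope $R\subseteq[0,1]^n$ with $R\cap\{0,1\}^n=S$ is $R=[0,1]^n$ itself, which is already integral, so its CG-rank is $0\leqslant 8$. Hence we may assume $S\subsetneq\{0,1\}^n$, which is precisely the hypothesis under which Theorem~\ref{thmMain} is stated.

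Next I would bound the \gap{} $\Delta$ of $S$ by $6$. If $S$ is nonempty, this is exactly the last assertion of Theorem~\ref{thm:PitchThree}, which applies since $S$ has notch $\pitch\leqslant 3$: the explicit description it provides consists of the box inequalities together with inequalities of the types \eqref{eqFacetType1}--\eqref{eqFacetType6}, each of which has nonnegative integer coefficients, right-hand side in $\{1,2,3,4,6\}$, and bounding hyperplane spanned by $0/1$-points, whence $\Delta\leqslant 6$. If $S=\emptyset$, then $\Delta=1$ by definition (and, moreover, notch at most $3$ forces $n\leqslant 2$). Either way $\Delta\leqslant 6$.

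Finally I would plug $\pitch\leqslant 3$ and $\Delta\leqslant 6$ into Theorem~\ref{thmMain}, which yields that the CG-rank of every polytope $R\subseteq[0,1]^n$ with $R\cap\{0,1\}^n=S$ is at most $\pitch+\Delta-1\leqslant 3+6-1=8$. I do not expect a genuine obstacle here, since the statement is an immediate consequence of Theorem~\ref{thmMain} and Theorem~\ref{thm:PitchThree}; the only point requiring a word of care is that Theorem~\ref{thmMain} is formulated only for proper subsets of the cube, which is why the degenerate case $S=\{0,1\}^n$ is split off at the start.
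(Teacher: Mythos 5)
Your proposal is correct and follows essentially the same route as the paper, which obtains the corollary directly by combining Theorem~\ref{thm:PitchThree} (giving $\Delta \leqslant 6$ for notch at most $3$) with Theorem~\ref{thmMain} (giving CG-rank at most $\pitch + \Delta - 1 \leqslant 8$). Your extra care with the degenerate cases $S = \{0,1\}^n$ and $S = \emptyset$ is sound and only makes explicit what the paper leaves implicit.
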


Note that when $\tw(\forbiddenvtxgraph{S}) \leqslant 2$, none of the inequalities \eqref{eqFacetType3}, \eqref{eqFacetType4}, or \eqref{eqFacetType6} can appear in the linear description of $\conv(S)$ because for each of them there is a set of indices $I \subseteq [n]$ of size~$3$ such that the characteristic vector of every proper subset of $I$ is in $\bar{S}$. This implies that $\forbiddenvtxgraph{S}$ contains a subdivision of $K_4$. Hence, we recover the same upperbound of $4$ on the CG-rank when $\tw(\forbiddenvtxgraph{S}) \leqslant 2$ established by Cornu\'ejols \& Lee~\cite{CL2016}. On the other hand, the notch $3$ case includes graphs of unbounded treewidth. For example, if we let $S \subseteq \{0,1\}^n$ be the set of vectors of support at least $3$, then $S$ has notch $3$ and $\forbiddenvtxgraph{S}$ contains a subdivision of $K_{n+1}$.   

It is an interesting open question whether $\Delta$ is also bounded by a constant when $p \in \{4,5,6\}$ (we know that $\Delta$ can be unbounded when $p=7$ by Corollary~\ref{cor:badfacet}).  When $p=3$, we showed that the coefficients could be described by using at most one equation of support more than $2$. Therefore, the rest of the equations could be encoded by an (edge-coloured) graph.  A key observation was that the components of this graph are particularly easy to describe. However, this breaks down if we attempt to carry out the same strategy for $p \in \{4,5,6\}$, as we have to use hypergraphs instead of graphs.

\section{Acknowledgments}

We thank Pierre Aboulker, Yuri Faenza and Robert Weismantel for discussions at an early stage of this research. We also thank Gennadiy Averkov, Michele Conforti and Volker Kaibel for their feedback.  Finally, we would like to thank two anonymous referees for their careful reading and very constructive comments on the paper. Yohann Benchetrit, Samuel Fiorini and Tony Huynh were supported by ERC Consolidator Grant 615640-ForEFront.

\bibliography{references}{}
\bibliographystyle{amsplain}

\end{document}